\date{26 May 2017}
\newlength{\defbaselineskip}
\newcommand{\setlinespacing}[1]%
           {\setlength{\baselineskip}{#1 \defbaselineskip}}
\newcommand{\bR}{{\mathbb{R}}}
\newcommand{\N}{{\mathbb{N}}}
\newcommand{\actaqed}{\hfill $\actabox$}
{\medskip\noindent \textit{Proof of #1. }}%
{\actaqed \medskip}
\def\D{{\mathcal D}}
\def\C{{\mathcal C}}
\def \Tr{\mathcal T}
\def \V{\mathcal V}
\def \cM{\mathcal M}
\def\R{{\mathbb R}}
\def\Z{\mathbb Z}
\def \T{\mathbb T}
\def\bP{\mathbb P}
\def\bE{\mathbb E}
\def\bbC{\mathbb C}
\def \<{\langle}
\def\>{\rangle}
\def \L{\Lambda}
\def \e{\varepsilon}
\def \de{\delta}
\def \ff{\varphi}
\def\la{\lambda}
\def \sp{\operatorname{span}}
\def \sign{\operatorname{sign}}
\def\bx{\mathbf x}
\def\bz{\mathbf z}
\def\bk{\mathbf k}
\def\bv{\mathbf v}
\def\bw{\mathbf w}
\def\bm{\mathbf m}
\def\bs{\mathbf s}
\def\bN{\mathbf N}
\newtheorem{Theorem}{Theorem}[section]
\newtheorem{Lemma}{Lemma}[section]
\newtheorem{Proposition}{Proposition}[section]
\newtheorem{Remark}{Remark}[section]
\numberwithin{equation}{section}
\newcommand{\be}{\begin{equation}}
\newcommand{\ee}{\end{equation}}
\begin{document}

\title{The Marcinkiewicz-type discretization theorems for the hyperbolic cross polynomials}
\author{V.N. Temlyakov\thanks{University of South Carolina and Steklov Institute of Mathematics.  }}
\maketitle
\begin{abstract}
{The main goal of this paper is to study the discretization problem for the hyperbolic cross trigonometric polynomials. This important problem turns out to be very difficult. In this paper we begin a systematic study of this problem and demonstrate two different techniques -- the probabilistic and the number theoretical techniques.}
\end{abstract}

\section{Introduction} 
\label{I} 

Discretization is 
a very important step in making a continuous problem computationally feasible. The problem of construction of good sets of points in a multidimensional domain is a fundamental problem of mathematics and computational mathematics.
We note that the problem of arranging points in a multidimensional domain is also a fundamental problem in coding theory. It is a problem on optimal spherical codes. This problem is equivalent to the problem from compressed sensing on building 
large incoherent dictionaries in $\R^d$.   

A prominent  example of classical discretization problem is a problem of metric entropy (covering numbers, entropy numbers). 
Bounds for the $\varepsilon$-entropy of function classes are important by themselves and also have important connections to other fundamental problems. We give one new upper bound for the entropy numbers in Section \ref{entropy}.
 
Another prominent example of a discretization problem is the problem of numerical integration. It turns out that contrary to the numerical integration in the univariate case and in the multivariate case of the isotropic and anisotropic Sobolev and Nikol'skii smoothness classes (see \cite{TBook}, Ch.2), where regular grids methods are optimal (in the sense of order), 
in the case of numerical integration of functions with mixed smoothness the regular grids methods are very far from being optimal. Numerical integration in the mixed smoothness classes requires deep number theoretical results for constructing optimal (in the sense of order) cubature formulas.  

A problem of optimal recovery is one more example of a discretization problem. This problem turns out to be very difficult for the mixed smoothness classes. It is not solved even in the case of optimal recovery in the $L_2$ norm. It is well known (see, for instance, \cite{DTU}) that the hyperbolic cross polynomials play a fundamental role in approximation of functions from mixed smoothness classes. The main goal of this paper is to study the discretization problem for the hyperbolic cross trigonometric polynomials. This important problem turns out to be very difficult. In this paper we begin a systematic study of this problem and demonstrate two different techniques -- the probabilistic and the number theoretical techniques. 

Let $\Omega$ be a compact subset of $\R^d$ with the probability measure $\mu$. We say that a linear subspace $X_N$ (index $N$ here, usually, stands for the dimension of $X_N$)  of the $L_q(\Omega)$, $1\le q < \infty$, admits the Marcinkiewicz-type discretization theorem with parameters $m$ and $q$ if there exist a set $\{\xi^\nu \in \Omega, \nu=1,\dots,m\}$ and two positive constants $C_j(d,q)$, $j=1,2$, such that for any $f\in X_N$ we have
\be\label{1.1}
C_1(d,q)\|f\|_q^q \le \frac{1}{m} \sum_{\nu=1}^m |f(\xi^\nu)|^q \le C_2(d,q)\|f\|_q^q.
\ee
In the case $q=\infty$ we define $L_\infty$ as the space of continuous on $\Omega$ functions and ask for 
\be\label{1.2b}
C_1(d)\|f\|_\infty \le \max_{1\le\nu\le m} |f(\xi^\nu)| \le  \|f\|_\infty.
\ee

We will also use a brief way to express the above property: the $\cM(m,q)$ theorem holds for  a subspace $X_N$ or $X_N \in \cM(m,q)$. 
Relation (\ref{1.1}) allows us to discretize the $L_q$ norm of any element in $X_N$ with respect to the net $\{\xi^\nu\}_{\nu=1}^m$. 
Here we concentrate on the periodic case of $d$-variate functions. Thus in our case $\Omega =\T^d$, $\mu$ is a normalized Lebesgue measure on $\T^d$. 
We study the  Marcinkiewicz-type discretization theorems for subspaces of the 
trigonometric polynomials. Let $Q$ be a finite subset of $\Z^d$. We denote
$$
\Tr(Q):= \{f: f=\sum_{\bk\in Q}c_\bk e^{i(\bk,\bx)}\}.
$$
We briefly present well known results related to the Marcinkiewicz-type discretization theorems for the trigonometric polynomials. We begin with the case $Q=\Pi(\bN):=[-N_1,N_1]\times \cdots \times [-N_d,N_d]$, $N_j \in \N$ or $N_j=0$, $j=1,\dots,d$, $\bN=(N_1,\dots,N_d)$. 
We denote
\begin{align*}
P(\mathbf N) := \bigl\{\mathbf n = (n_1 ,\dots,n_d),&\qquad n_j\ -\
\text{ are nonnegative integers},\\
&0\le n_j\le 2N_j  ,\qquad j = 1,\dots,d \bigr\},
\end{align*}
and set
$$
\bx^{\mathbf n}:=\left(\frac{2\pi n_1}{2N_1+1},\dots,\frac{2\pi n_d}
{2N_d+1}\right),\qquad \mathbf n\in P(\mathbf N) .
$$
Then for any $t\in \Tr(\Pi(\mathbf N))$
\be\label{b1.5}
\|t\|_2^2  =\vartheta(\mathbf N)^{-1}\sum_{\mathbf n\in P(\mathbf N)}
\bigl|t(\bx^{\mathbf n})\bigr|^2   
\ee
where $\vartheta(\mathbf N) := \prod_{j=1}^d (2N_j  + 1)=\dim\Tr(\Pi(\bN))$.  

In particular, relation (\ref{b1.5}) shows that for any $\bN$ we have
\be\label{1.3}
\Tr(\Pi(\bN)) \in \cM(\vartheta(\bN),2).
\ee
The following version of (\ref{b1.5}) for $1<q<\infty$ is the well known Marcinkiewicz discretization theorem (for $d=1$) (see \cite{Z}, Ch.10, \S7 and \cite{TBook}, Ch.1, Section 2)
$$
C_1(d,q)\|t\|_q^q  \le\vartheta(\mathbf N)^{-1}\sum_{\mathbf n\in P(\mathbf N)}
\bigl|t(\bx^{\mathbf n})\bigr|^q  \le C_2(d,q)\|t\|_q^q,\quad 1<q<\infty,
$$
which implies the following generalization of (\ref{1.3})
\be\label{1.4}
\Tr(\Pi(\bN)) \in \cM(\vartheta(\bN),q),\quad 1<q<\infty.
\ee
Some modifications are needed in the case $q=1$ or $q=\infty$. 
Denote
\begin{align*}
P'(\mathbf N) := \bigl\{\mathbf n &= (n_1,\dots,n_d),\qquad n_j\ -\
\text{ are natural numbers},\\
&1\le n_j\le 4N_j  ,\qquad j = 1,\dots,d \bigr\}
\end{align*}
and set
$$
\bx(\mathbf n) :=\left (\frac{\pi n_1}{2N_1} ,\dots,\frac{\pi n_d}{2N_d}
\right)   ,\qquad \mathbf n\in P'(\mathbf N)  .
$$
In the case $N_j  = 0$ we assume $x_j (\mathbf n) = 0$. Denote ${\overline N} := \max (N,1)$ and $\nu(\bN) := \prod_{j=1}^d {\overline N_j}$. Then the following Marcinkiewicz-type discretization theorem is known
$$
C_1(d,q)\|t\|_q^q  \le\nu(4\mathbf N)^{-1}\sum_{\mathbf n\in P'(\mathbf N)}
\bigl|t(\bx({\mathbf n}))\bigr|^q  \le C_2(d,q)\|t\|_q^q,\quad 1\le q\le \infty,
$$
which implies the following relation
\be\label{1.5}
\Tr(\Pi(\bN)) \in \cM(\nu(4\bN),q),\quad 1\le q\le \infty.
\ee
Note that $\nu(4\bN) \le C(d) \dim \Tr(\Pi(\bN))$. 

In this paper we are primarily interested in the Marcinkiewicz-type discretization theorems for the hyperbolic cross trigonometric polynomials. For $\bs\in\Z^d_+$
define
$$
\rho (\bs) := \{\bk \in \Z^d : [2^{s_j-1}] \le |k_j| < 2^{s_j}, \quad j=1,\dots,d\}
$$
where $[x]$ denotes the integer part of $x$. We define the step hyperbolic cross
$Q_n$ as follows
$$
Q_n := \cup_{\bs:\|\bs\|_1\le n} \rho(\bs)
$$
and the corresponding set of the hyperbolic cross polynomials as 
$$
\Tr(Q_n) := \{f: f=\sum_{\bk\in Q_n} c_\bk e^{i(\bk,\bx)}\}.
$$
The problem on the right  Marcinkiewicz-type discretization theorems for the hyperbolic cross trigonometric polynomials is wide open. There is no sharp results on the growth of $m$ as a function on $n$ for the relation $\Tr(Q_n) \in \cM(m,q)$ to hold  for  $1\le q\le \infty$, $q\neq 2$. Clearly, $Q_n \subset \Pi(2^n,\dots,2^n)$, and therefore the above discussed results give
$$
\Tr(Q_n)\in \cM(m,q),\quad \text{provided} \quad m\ge C(d)2^{dn},\quad 1\le q\le \infty,
$$
with large enough $C(d)$.
Probably, the first  nontrivial result  in this direction was obtained in \cite{VT27}, where the set of points $\{\xi^\nu\}_{\nu=1}^p$ with $p\ll 2^{2n}n^{d-1}$ such that for all $t\in \Tr(Q_n)$ inequality
$$
\|t\|_2^2 \le \frac{1}{p} \sum_{\nu=1}^p |t(\xi^\nu)|^2
$$
holds was constructed. Later a very nontrivial surprising negative result was proved for $q=\infty$ (see \cite{KT3}, \cite{KT4}, and \cite{KaTe03}). The authors proved that the necessary condition for
$\Tr(Q_n)\in\cM(m,\infty)$ is $m\gg |Q_n|^{1+c}$ with absolute constant $c>0$.
There are deep general results about submatrices of orthogonal matrices, which 
provide very good Marcinkiewicz-type discretization theorems for $q=2$. For example, the Theorem from \cite{Rud} gives the following result
\be\label{1.5a}
\Tr(Q_n)\in \cM(m,2),\quad \text{provided} \quad m\ge C(d)|Q_n|n 
\ee
with large enough $C(d)$. 

We now comment on a recent breakthrough result by J. Batson, D.A. Spielman, and N. Srivastava \cite{BSS}. We formulate their result in our notations. Let  $\Omega_M=\{x^j\}_{j=1}^M$ be a discrete set with the probability measure $\mu(x^j)=1/M$, $j=1,\dots,M$. Assume that 
$\{u_i(x)\}_{i=1}^N$ is a real orthonormal on $\Omega_M$ system. Then for any 
number $d>1$ there exist a set of weights $w_j\ge 0$ such that $|\{j: w_j\neq 0\}| \le dN$ so that for any $f\in X_N:= \sp\{u_1,\dots,u_N\}$ we have (see \cite{VT161})
$$
\|f\|_2^2 \le \sum_{j=1}^M w_jf(x^j)^2 \le \frac{d+1+2\sqrt{d}}{d+1-2\sqrt{d}}\|f\|_2^2.
$$
In particular, this implies that the $L_2$ Marcinkiewicz-type discretization theorem holds for the above $X_N$ with $m\ge cN$ in a modified form -- we allow general weights $w_j$ instead of weights $1/m$ in formula (\ref{1.1}). 

In Section \ref{L2} we show how to derive the following result from the recent paper by  S. Nitzan, A. Olevskii, and A. Ulanovskii \cite{NOU}, which in turn is based on the paper of A. Marcus, D.A. Spielman, and N. Srivastava \cite{MSS}. 

\begin{Theorem}\label{NOUth} There are three positive absolute constants $C_1$, $C_2$, and $C_3$ with the following properties: For any $d\in \N$ and any $Q\subset \Z^d$   there exists a set of  $m \le C_1|Q| $ points $\xi^j\in \T^d$, $j=1,\dots,m$ such that for any $f\in \Tr(Q)$ 
we have
$$
C_2\|f\|_2^2 \le \frac{1}{m}\sum_{j=1}^m |f(\xi^j)|^2 \le C_3\|f\|_2^2.
$$
\end{Theorem}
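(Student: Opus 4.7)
The plan is to recast the $q=2$ discretization inequality as a spectral sparsification problem and then invoke the sparsification result of Nitzan--Olevskii--Ulanovskii, itself a consequence of the Marcus--Spielman--Srivastava solution of Weaver's $KS_2$ conjecture.

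\emph{Step 1: Matrix reformulation.} For $\xi\in\T^d$ set $u(\xi):=(e^{i(\bk,\xi)})_{\bk\in Q}\in\bbC^{|Q|}$. For $f=\sum_{\bk\in Q}c_\bk e^{i(\bk,\cdot)}\in\Tr(Q)$ we then have $|f(\xi)|^2=c^{*}u(\xi)u(\xi)^{*}c$ and $\|f\|_2^2=\|c\|_2^2$, while orthonormality of the characters gives $\int_{\T^d}u(\xi)u(\xi)^{*}\,d\mu(\xi)=I_{|Q|}$. Hence the sought two-sided bound is equivalent to the operator inequality
$$
C_2\,I_{|Q|}\;\preceq\;\frac{1}{m}\sum_{j=1}^m u(\xi^j)u(\xi^j)^{*}\;\preceq\;C_3\,I_{|Q|}.
$$

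\emph{Step 2: Candidate pool from a regular grid.} Choose $\bN$ with $Q\subset\Pi(\bN)$ and take the Marcinkiewicz lattice $X=\{\bx^{\mathbf n}:\mathbf n\in P(\bN)\}$ of cardinality $M=\vartheta(\bN)$. Applying (\ref{b1.5}) to the characters $e^{i(\bk-\bk',\cdot)}$ with $\bk,\bk'\in Q$ yields $\frac{1}{M}\sum_{\bx\in X}u(\bx)u(\bx)^{*}=I_{|Q|}$. Setting $v_\bx:=u(\bx)/\sqrt M$, we obtain an isotropic decomposition into $M$ equal-norm rank-one terms with $\|v_\bx\|^2=|Q|/M$, which we can make as small as we please by enlarging $\bN$.

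\emph{Step 3: Iterated halving via MSS.} The NOU argument now applies: given an isotropic family $\{v_i\}$ with $\|v_i\|^2\le\eta$ for an appropriate universal $\eta$, Weaver's $KS_2$ theorem permits a partition of the indices into two equal halves each of which, after rescaling by $2$, is isotropic up to a multiplicative error $1+O(\sqrt\eta)$. Iterating this halving about $\log_2(\eta M/|Q|)$ times reduces $X$ to a subset of cardinality $m\asymp|Q|$ with uniform weights $1/m$, and on this subset the displayed matrix inequality holds with universal constants -- giving the theorem.

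\emph{Main obstacle.} The delicate point is bookkeeping the compounded spectral error. At the $j$-th halving the quantity controlling Weaver's hypothesis has grown to $2^j|Q|/M$, so one must pick $M$ large enough that this stays below $\eta$ throughout, and one must show that the product $\prod_{j}(1+O(\sqrt{2^j|Q|/M}))$ over the $\log_2(\eta M/|Q|)$ rounds stays bounded by an absolute constant. Since the square roots form a geometric progression whose sum is dominated by its last term $O(\sqrt\eta)$, a once-and-for-all choice of small $\eta$ suffices, and the resulting constants $C_1,C_2,C_3$ depend neither on $d$ nor on the particular set $Q$ -- exactly the universality asserted.
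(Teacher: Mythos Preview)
Your proposal is correct and follows essentially the same route as the paper: embed $Q$ in a box $\Pi(\bN)$, use the regular grid $\{\bx^{\mathbf n}\}_{\mathbf n\in P(\bN)}$ to obtain an isotropic equal-norm rank-one decomposition of the identity, and then apply the Nitzan--Olevskii--Ulanovskii sparsification (based on MSS/Weaver $KS_2$) to extract a subset of size $\asymp |Q|$. The only cosmetic difference is that the paper quotes NOU's Lemma~2 as a black box (and separately checks $|J|\asymp N$ via a short remark), whereas you sketch the iterated-halving proof of that lemma and its error bookkeeping.
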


 Theorem \ref{NOUth}, basically, solves the problem of the Marcinkiewicz-type discretization theorem for the $\Tr(Q_n)$ in the $L_2$ case.
The reader can find some more discussion of the $L_2$ case in 
\cite{VT161}. We also refer to the paper \cite{Ka} for a discussion of a
recent outstanding progress in the area of submatrices of orthogonal matrices. 

The most important results of this paper are in Section \ref{L1}. We prove there that for $d=2$ (see Theorem \ref{T2.1})
$$
\Tr(Q_n) \in \cM(m,1),\quad \text{provided} \quad m\ge C |Q_n|n^{7/2}
$$
with large enough $C$,
and for $d\ge 3$ (see Theorem \ref{T2.3})
$$
\Tr(Q_n) \in \cM(m,1),\quad \text{provided} \quad m\ge C(d) |Q_n|n^{d/2+3}
$$
with large enough $C(d)$.
A very interesting open problem is the following. Does the relation $\Tr(Q_n) \in \cM(m,1)$ hold with $ m\asymp |Q_n|$? 

The above results of Section \ref{L1} are obtained with a help of probabilistic technique. We use a variant of the Bernstein concentration measure inequality 
from \cite{BLM}, the chaining technique from \cite{KoTe} (see also \cite{Tbook}, Ch.4), and the bounds of the entropy numbers from a very recent paper \cite{VT156}. We note that the idea of chaining technique goes back to the 1930s, when it was suggested by A.N. Kolmogorov. Later, these type of results have been developed in the study of the central limit theorem in probability theory (see, for instance, \cite{GZ}). The reader can find further results on the chaining technique in \cite{Ta}. 

In Section \ref{Lq} we extend the technique developed in Section \ref{L1} to the case $1<q< \infty$. In this case we are only able to prove the following relation (see Theorem \ref{T3.3})
\be\label{1.6}
\Tr(Q_n) \in \cM(m,q),\quad \text{provided} \quad m\ge C(d,q) |Q_n|^{2-1/q}n^{a(d,q)}
\ee
with some large enough constants $C(d,q)$ and $a(d,q)$. A very interesting open problem here is the following. Does the relation $\Tr(Q_n) \in \cM(m,q)$ hold for $ m\ge C(d,q) |Q_n|n^{c(d,q)}$ with some constants $C(d,q)$ and $c(d,q)$? As we pointed out above, in the case $q=2$ the answer to this question is "yes". Moreover, we can take $c(d,q)=0$ (see Theorem \ref{NOUth}). This indicates that our technique from Section \ref{L1}, which works reasonably well for $q=1$, is not a good technique for $q>1$. 

As we already pointed out above the technique developed in Sections \ref{L1} and \ref{Lq} is the probabilistic technique. This allows us to prove existence of 
good points for the Marcinkiewicz-type discretization theorems but it does not provide an algorithm of construction of these points. It would be very interesting 
to provide deterministic constructions of points sets, which give at least the same bounds for $m$ as the probabilistic technique does. In Section \ref{L2} we 
present a deterministic construction, which is based on number theoretical 
considerations. This technique works for any finite set $Q\subset \Z^d$. However, it is limited to the case $q=2$. Theorem \ref{NOUth}   shows that for $q=2$ the probabilistic technique provides the Marcinkiewicz-type discretization theorem for $m\ge C |Q_n| $ with some large enough constant $C$. In Section \ref{L2} we only prove the Marcinkiewicz-type discretization theorem for $m\ge C(d) |Q_n|^2 $ with large enough constant $C(d)$. However, we prove the Marcinkiewicz-type discretization theorem in the strong form with $C_1(d,2)=C_2(d,2)=1$. Namely, for a given set $Q$ we construct a set $\{\xi^\nu\}_{\nu=1}^m$ with $m\le C(d)|Q|^2$ such that for any $t\in \Tr(Q)$ we have
$$
\|t\|_2^2 = \frac{1}{m}\sum_{\nu=1}^m |t(\xi^\nu)|^2.
$$

The probabilistic technique developed in Sections \ref{L1} and \ref{Lq} requires 
bounds on the entropy numbers $\e_k(\Tr(Q_n)_q,L_\infty)$ of the unit $L_q$ balls of $\Tr(Q_n)$ in $L_\infty$. This problem by itself is a deep and difficult problem. Recently, a new method based on greedy approximation approach was developed in \cite{VT156}. We use results from \cite{VT156} in Sections \ref{L1} and \ref{Lq}. Section \ref{entropy} complements \cite{VT156} by results on the upper bounds for the $\e_k(\Tr(Q_n)_1,L_\infty)$, $d\ge 3$.

\section{The Marcinkiewicz-type theorem in $L_1$. Probabilistic technique}
\label{L1}

We begin with two lemmas, which are analogs of the well known concentration measure inequalities (see, for instance \cite{Tbook}, Ch.4). Lemma \ref{L2.1} is from \cite{BLM}.
\begin{Lemma}\label{L2.1} Let $\{g_j\}_{j=1}^m$ be independent random variables with $\bE g_j=0$, $j=1,\dots,m$, which satisfy
$$
\|g_j\|_1\le 2,\qquad \|g_j\|_\infty \le M,\qquad j=1,\dots,m.
$$
Then for any $\eta \in (0,1)$ we have the following bound on the probability
$$
\bP\left\{\left|\sum_{j=1}^m g_j\right|\ge m\eta\right\} < 2\exp\left(-\frac{m\eta^2}{8M}\right).
$$
\end{Lemma}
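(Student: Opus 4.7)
The plan is to prove this via the standard Cram\'er--Chernoff (exponential Markov) method, which is the usual route to Bernstein-type bounds. First I would reduce to a one-sided tail estimate: since the hypotheses on $\{g_j\}$ are symmetric under $g_j \mapsto -g_j$, the same bound applies to $\mathbb{P}\bigl(-\sum_j g_j \ge m\eta\bigr)$, and a union bound produces the factor $2$ in the conclusion. So it suffices to bound $\mathbb{P}\bigl(\sum_j g_j \ge m\eta\bigr)$.

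For $\lambda>0$ the exponential Markov inequality combined with independence gives
$$
\mathbb{P}\left(\sum_{j=1}^m g_j \ge m\eta\right) \le e^{-\lambda m \eta} \prod_{j=1}^m \mathbb{E}\bigl[e^{\lambda g_j}\bigr].
$$
The main work is estimating each factor. Restricting to $0<\lambda \le 1/M$, the almost sure bound $|\lambda g_j|\le 1$ allows the elementary inequality $e^x \le 1 + x + x^2$ (valid for $|x|\le 1$), so
$$
\mathbb{E}\bigl[e^{\lambda g_j}\bigr] \le 1 + \lambda\, \mathbb{E}[g_j] + \lambda^2\, \mathbb{E}[g_j^2].
$$
The linear term vanishes by the zero-mean hypothesis, and the key variance bound comes from combining the $L_\infty$ and $L_1$ assumptions:
$$
\mathbb{E}[g_j^2] \le \|g_j\|_\infty \, \mathbb{E}|g_j| = \|g_j\|_\infty \|g_j\|_1 \le 2M.
$$
Hence $\mathbb{E}[e^{\lambda g_j}] \le 1 + 2M\lambda^2 \le \exp(2M\lambda^2)$, and taking the product,
$$
\mathbb{P}\left(\sum_{j=1}^m g_j \ge m\eta\right) \le \exp\bigl(-\lambda m\eta + 2mM\lambda^2\bigr).
$$

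The final step is to optimize in $\lambda$. The unconstrained minimizer is $\lambda^\ast = \eta/(4M)$, which satisfies $\lambda^\ast \le 1/M$ because $\eta\in(0,1)\subset(0,4)$, so it is admissible. Substituting yields the one-sided bound $\exp\bigl(-m\eta^2/(8M)\bigr)$, and combining the two tails gives the stated inequality. The only mildly nontrivial point is the variance estimate $\mathbb{E}[g_j^2]\le 2M$, which is where the two norm bounds interact; everything else is routine for a Bernstein-type inequality with a uniform bound on the summands. I would expect the hypothesis $\eta<1$ to be used exactly to guarantee $\lambda^\ast \le 1/M$ so that the Taylor expansion step is legal.
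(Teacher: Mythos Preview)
Your proposal is correct and follows essentially the same route as the paper: the exponential Markov inequality, the elementary bound $e^x\le 1+x+x^2$ for $x\le 1$ applied under the restriction $\lambda M\le 1$, the variance estimate $\mathbb{E}[g_j^2]\le \|g_j\|_\infty\|g_j\|_1\le 2M$, and the choice $\lambda=\eta/(4M)$ (admissible precisely because $\eta<1$). The paper presents the argument jointly with that of Lemma~\ref{L2.2}, but the Lemma~\ref{L2.1} portion is identical to what you wrote.
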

\begin{Lemma}\label{L2.2} Let $\{g_j\}_{j=1}^m$ be independent random variables with $\bE g_j=0$, $j=1,\dots,m$, which satisfy
$$
\|g_j\|_2\le 2,\qquad \|g_j\|_\infty \le M,\qquad j=1,\dots,m.
$$
Then   we have the following bound on the probability
$$
\bP\left\{\left|\sum_{j=1}^m g_j\right|\ge m\eta\right\} < 2\left\{\begin{array}{ll}\exp\left(-\frac{m\eta^2}{8}\right),&\quad \eta \le 4/M,\\ \exp\left(-\frac{m\eta}{2M}\right),&\quad \eta > 4/M.\end{array}\right.
$$
\end{Lemma}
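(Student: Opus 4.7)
The plan is to apply the Cramér--Chernoff/Bernstein method, following the same template that establishes Lemma~\ref{L2.1} but substituting the $L_2$ bound on $g_j$ for the $L_1$ bound. By symmetry and a union bound it suffices to prove the corresponding one-sided bound on $S_m := \sum_{j=1}^m g_j$; the factor $2$ in the conclusion then comes from also estimating $-S_m$.

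First I would obtain a Bennett-type bound on the moment generating function of a single summand. Writing $e^{\lambda g_j} = 1 + \lambda g_j + \sum_{k\ge 2}(\lambda g_j)^k/k!$, using $\bE g_j = 0$, and the pointwise inequality $|g_j|^k \le M^{k-2}\,g_j^2$ for $k \ge 2$, one arrives at
\[
\bE e^{\lambda g_j} \;\le\; 1 + \frac{\bE[g_j^2]}{M^2}\bigl(e^{\lambda M}-1-\lambda M\bigr) \;\le\; \exp\!\left(\frac{4}{M^2}\bigl(e^{\lambda M}-1-\lambda M\bigr)\right),
\]
the last step using $\|g_j\|_2 \le 2$ so that $\bE[g_j^2] \le 4$. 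Independence tensorizes this bound, and Markov's inequality then produces the master bound
\[
\bP\bigl(S_m \ge m\eta\bigr) \;\le\; \exp\!\left(\frac{4m}{M^2}\bigl(e^{\lambda M}-1-\lambda M\bigr) - \lambda m\eta\right) \qquad \text{for every } \lambda > 0.
\]

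It then remains to optimize $\lambda$ in the two regimes of the statement. For $\eta \le 4/M$ I would pick $\lambda$ proportional to $\eta$ with $\lambda M \le 1$; in that range $e^{\lambda M}-1-\lambda M$ behaves like $(\lambda M)^2/2$, the master bound becomes essentially Gaussian, and optimizing over $\lambda$ yields the sub-Gaussian tail $\exp(-m\eta^2/8)$. For $\eta > 4/M$ I would take $\lambda$ of order $1/M$, so that $e^{\lambda M}-1-\lambda M$ is an absolute constant and the linear term $-\lambda m\eta$ dominates, producing the sub-exponential tail $\exp(-m\eta/(2M))$. The threshold $\eta = 4/M$ is precisely where the two optimized bounds coincide ($\eta^2/8 = \eta/(2M)$), which is why the piecewise formula takes this form.

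The main obstacle is purely technical, namely the book-keeping needed to pin down the stated constants $1/8$ and $1/(2M)$ by the careful choice of $\lambda$ in each regime; no conceptual difficulty arises. The qualitative picture is that Lemma~\ref{L2.1} comes from the same template with the cruder estimate $\bE|g_j|^k \le M^{k-1}\|g_j\|_1 \le 2 M^{k-1}$, which replaces the factor $4/M^2$ above by $2/M$ and leads to the worse sub-Gaussian rate $\exp(-m\eta^2/(8M))$ instead of $\exp(-m\eta^2/8)$; this is exactly the gain of having $L_2$ rather than only $L_1$ control on the summands.
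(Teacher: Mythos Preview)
Your proposal is correct and follows the same Cram\'er--Chernoff template as the paper. The only technical difference is that the paper does not sum the full Bennett series; instead it invokes the elementary inequality $e^x\le 1+x+x^2$ for $x\le 1$, which under the constraint $\lambda M\le 1$ gives directly $\bE e^{\lambda g_j}\le 1+\lambda^2\bE g_j^2\le \exp(2\lambda^2)$, hence the quadratic log-MGF bound $\exp(2m\lambda^2)$ without passing through $e^{\lambda M}-1-\lambda M$. From there the paper simply takes $\lambda=\eta/4$ when $\eta\le 4/M$ (so that $\lambda M\le 1$) and $\lambda=1/M$ when $\eta>4/M$, which yields the two cases immediately. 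Your route via the Bennett MGF bound is the more standard textbook derivation and is slightly more general (valid for all $\lambda>0$), while the paper's shortcut is a touch more elementary and makes the appearance of the threshold $\eta=4/M$ transparent as the point where the constraint $\lambda M\le 1$ starts to bind; the two arguments are otherwise the same.
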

\begin{proof} The proofs of both lemmas are similar. For completeness we give 
the detailed proof of Lemma \ref{L2.1} from \cite{BLM} and give a comment on the modifications of this proof, which prove Lemma \ref{L2.2}. We use the inequality $e^x \le 1+x+x^2$, $x\le 1$. Then for $0<\lambda M\le 1$ we have
$$
\int \exp(\lambda g_j) d\mu \le 1+\lambda^2\int g_j^2 d\mu.
$$
In the proof of Lemma \ref{L2.1} we bound
$$
1+\lambda^2\int g_j^2 d\mu \le 1+\lambda^2\|g_j\|_1\|g_j\|_\infty \le \exp(2\lambda^2M).
$$
In the proof of Lemma \ref{L2.2} we bound
$$
1+\lambda^2\int g_j^2 d\mu \le 1+\lambda^2\|g_j\|_2 \le \exp(2\lambda^2).
$$
Therefore, in the proof of Lemma \ref{L2.1} we get
\be\label{2.1}
\int\exp\left(\lambda\sum_{j=1}^m g_j\right) d\mu \le \exp(2m\lambda^2M)
\ee
and in the proof of Lemma \ref{L2.2} we obtain
$$
\int\exp\left(\lambda\sum_{j=1}^m g_j \right)d\mu \le \exp(2m\lambda^2).
$$
We now demonstrate how to complete the proof of Lemma \ref{L2.1}. The completion of proof of Lemma \ref{L2.2} goes along the same lines. Inequality (\ref{2.1}) implies
$$
\exp(\lambda m\eta)\bP\left\{\sum_{j=1}^m g_j\ge m\eta\right\}\le\int\exp\left(\lambda\sum_{j=1}^m g_j \right)d\mu \le \exp(2m\lambda^2M).
$$
Choosing $\lambda =\eta/(4M)$ we complete the proof.
\end{proof}

We now consider measurable functions $f(\bx)$, $\bx\in \Omega$. For $1\le q<\infty$ define
$$
L^q_\bz(f) := \frac{1}{m}\sum_{j=1}^m |f(\bx^j)|^q -\|f\|_q^q,\qquad \bz:= (\bx^1,\dots,\bx^m).
$$
Let $\mu$ be a probabilistic measure on $\Omega$. Denote $\mu^m := \mu\times\cdots\times\mu$ the probabilistic measure on $\Omega^m := \Omega\times\cdots\times\Omega$.
We will need the following inequality, which is a corollary of Lemma \ref{L2.1}. 
\begin{Proposition}\label{P2.1} Let $f_j\in L_1(\Omega)$ be such that
$$
\|f_j\|_1 \le 1/2,\quad j=1,2;\qquad \|f_1-f_2\|_\infty \le \delta.
$$
Then
\be\label{2.2}
\mu^m\{\bz: |L^1_\bz(f_1) -L^1_\bz(f_2)| \ge \eta\} < 2\exp\left(-\frac{m\eta^2}{16\delta}\right).
\ee
\end{Proposition}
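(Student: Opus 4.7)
The plan is to rewrite $L^1_\bz(f_1)-L^1_\bz(f_2)$ as an empirical average of $m$ independent mean-zero random variables whose $L^1$ and $L^\infty$ norms fit the hypotheses of Lemma \ref{L2.1}, and then to apply that lemma with $M=2\delta$.

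First, I would introduce the pointwise difference $h(\bx):=|f_1(\bx)|-|f_2(\bx)|$. The reverse triangle inequality gives $|h(\bx)|\le|f_1(\bx)-f_2(\bx)|\le\delta$ for every $\bx$, so $\|h\|_\infty\le\delta$; the ordinary triangle inequality combined with $\|f_j\|_1\le 1/2$ gives $\|h\|_1\le\|f_1\|_1+\|f_2\|_1\le 1$. I then set $g_j:=h(\bx^j)-\bE h$ for $j=1,\dots,m$. Because the $\bx^j$ are i.i.d.~$\mu$, the $g_j$ are independent and mean zero, and a direct computation using $\bE h=\|f_1\|_1-\|f_2\|_1$ yields
$$
L^1_\bz(f_1)-L^1_\bz(f_2)=\frac{1}{m}\sum_{j=1}^m h(\bx^j)-(\|f_1\|_1-\|f_2\|_1)=\frac{1}{m}\sum_{j=1}^m g_j.
$$

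Next, I would verify the norm conditions of Lemma \ref{L2.1}. From $\|h\|_\infty\le\delta$ and $|\bE h|\le\|h\|_1\le\delta$ I obtain $\|g_j\|_\infty\le 2\delta$, and from $\|g_j\|_1\le 2\|h\|_1\le 2$ the normalization $\|g_j\|_1\le 2$ is exactly satisfied. Applying Lemma \ref{L2.1} with $M=2\delta$ then gives
$$
\mu^m\bigl\{\bz:\bigl|\textstyle\sum_{j=1}^m g_j\bigr|\ge m\eta\bigr\}<2\exp\Bigl(-\frac{m\eta^2}{8\cdot 2\delta}\Bigr)=2\exp\Bigl(-\frac{m\eta^2}{16\delta}\Bigr),
$$
which is exactly (\ref{2.2}) after rewriting the event in terms of $L^1_\bz(f_1)-L^1_\bz(f_2)$.

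I do not anticipate any genuine obstacle: the argument is essentially bookkeeping. The only point worth noting is that the two hypotheses of the proposition play complementary roles, namely $\|f_j\|_1\le 1/2$ is exactly what is needed to match the normalization $\|g_j\|_1\le 2$ of Lemma \ref{L2.1}, while $\|f_1-f_2\|_\infty\le\delta$ supplies the uniform bound $\|g_j\|_\infty\le 2\delta$ that feeds into $M$ and hence into the exponent.
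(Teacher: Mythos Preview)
Your proof is correct and follows essentially the same approach as the paper: your centered variables $g_j=h(\bx^j)-\bE h$ coincide exactly with the paper's $g(\bx^j)$ where $g(\bx)=|f_1(\bx)|-\|f_1\|_1-(|f_2(\bx)|-\|f_2\|_1)$, and both arguments conclude by applying Lemma~\ref{L2.1} with $M=2\delta$. The only cosmetic difference is that you introduce the uncentered $h$ first and then subtract its mean, whereas the paper writes down the centered function directly.
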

\begin{proof} Consider the function
$$
g(\bx) := |f_1(\bx)|-\|f_1\|_1 - (|f_2(\bx)|-\|f_2\|_1).
$$
Then $\int g(\bx)d\mu =0$ and
$$
\|g\|_1 \le 2\|f_1\|_1 +2\|f_2\|_1 \le 2,\qquad \|g\|_\infty \le 2\|f_1-f_2\|_\infty \le 2\delta.
$$
Consider $m$ independent variables $\bx^j\in \Omega$, $j=1,\dots,m$. For $\bz\in \Omega^m$ define $m$ independent random variables $g_j(\bz)$ as $g_j(\bz) := g(\bx^j)$. Clearly,
$$
\frac{1}{m}\sum_{j=1}^m g_j = L^1_\bz(f_1)-L^1_\bz(f_2).
$$
Applying Lemma \ref{L2.1} with $M=2\delta$ to $\{g_j\}$ we obtain (\ref{2.2}).
\end{proof}

We now prove the Marcinkiewicz-type theorem for discretization of the $L_1$ norm of the bivariate hyperbolic cross polynomials. 
\begin{Theorem}\label{T2.1} For any $n\in \N$ there exists a set of $m \le C_1|Q_n|n^{7/2}$ points $\xi^j\in \T^2$, $j=1,\dots,m$ such that for any $f\in \Tr(Q_n)$ 
we have
$$
C_2\|f\|_1 \le \frac{1}{m}\sum_{j=1}^m |f(\xi^j)| \le C_3\|f\|_1.
$$
\end{Theorem}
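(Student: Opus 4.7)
The plan is to use the probabilistic method: sample $\xi^1,\dots,\xi^m$ independently and uniformly on $\T^2$, and show that with probability strictly bigger than $1/2$ this net satisfies the desired Marcinkiewicz inequality for every $f\in\Tr(Q_n)$. By homogeneity it suffices to establish, for some realization $\bz=(\xi^1,\dots,\xi^m)$, the single uniform bound
$$
\sup_{f\in F}\bigl|L^1_\bz(f)\bigr|\le \tfrac14,\qquad F:=\{f\in\Tr(Q_n):\|f\|_1\le \tfrac12\};
$$
rescaling an arbitrary $f\in\Tr(Q_n)$ to norm $1/2$ then produces (\ref{1.1}) with $q=1$ and constants $C_2=1/2$, $C_3=3/2$.

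To get a uniform bound I would run the chaining technique of \cite{KoTe} in the $L_\infty$-metric on $F$. Put $N_k:=2^k$ and choose minimal $\e_{N_k}(F,L_\infty)$-nets $\mathcal{N}_k\subset F$, so $\log_2|\mathcal{N}_k|\le N_k$. For $f\in F$ let $f_k\in\mathcal{N}_k$ be its nearest point in $L_\infty$ and telescope
$$
L^1_\bz(f)=L^1_\bz(f_0)+\sum_{k=0}^{K-1}\bigl(L^1_\bz(f_{k+1})-L^1_\bz(f_k)\bigr)+\bigl(L^1_\bz(f)-L^1_\bz(f_K)\bigr),
$$
where the level $K$ is chosen so large that the tail is handled deterministically by the crude bound $|L^1_\bz(f)-L^1_\bz(f_K)|\le 2\|f-f_K\|_\infty$. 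Each middle term is a genuinely random quantity to which Proposition \ref{P2.1} applies with $\delta\le 2\e_{N_k}(F,L_\infty)$. A union bound over the $|\mathcal{N}_k|\cdot|\mathcal{N}_{k+1}|\le 2^{2N_{k+1}}$ candidate pairs and over the levels $k$ forces the choice
$$
\eta_k\asymp\sqrt{\e_{N_k}(F,L_\infty)\cdot N_{k+1}/m},
$$
and the condition $\sum_k\eta_k\le 1/4$ reduces the whole theorem to the estimate
$$
m\ge C\Bigl(\sum_{k}\sqrt{\e_{N_k}(F,L_\infty)\,2^{k+1}}\Bigr)^2.
$$

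The remaining input is the entropy bound for the bivariate hyperbolic-cross polynomials from \cite{VT156}. For $d=2$ these give, in the interesting range $N_k\le|Q_n|$, an estimate of the shape $\e_{N_k}(F,L_\infty)\ll |Q_n|^{1/2}n^{\alpha}N_k^{-1/2}$ with explicit $\alpha$, together with rapid decay once $N_k\gg|Q_n|$. Inserting this into the chaining sum produces a geometric series in $2^{k/4}$, dominated by the critical level $k\asymp\log|Q_n|\asymp n$; squaring gives $m\gtrsim|Q_n|\cdot n^{2\alpha+1}$, and plugging in the explicit exponent from \cite{VT156} yields the advertised bound $m\ge C|Q_n|n^{7/2}$.

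The main obstacle is the careful bookkeeping of the polynomial-in-$n$ factors. The entropy $\e_k(\Tr(Q_n)_1,L_\infty)$ carries a Nikolskii-type factor $|Q_n|^{1/2}$ together with extra powers of $n$ coming from the hyperbolic-cross geometry, and every such factor propagates into the final exponent; moreover one must use the $L_1$-based concentration Lemma \ref{L2.1} (rather than the $L_2$-version Lemma \ref{L2.2}), since the hypothesis $\|g_j\|_1\le 2$ is the one naturally available when chaining inside the $L_1$-ball. A secondary, purely technical, point is to fix $K$ of size polynomial in $n$, exploiting the fast decay of the entropy numbers beyond $|Q_n|$, so that the number of chaining levels contributes only a harmless factor.
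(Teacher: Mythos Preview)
Your overall architecture is the same as the paper's --- random sampling, chaining in $L_\infty$ over the $L_1$-ball of $\Tr(Q_n)$, Proposition~\ref{P2.1} on the links, and a union bound --- and the reduction to the entropy numbers is correct. The paper even makes the simpler choice $\eta_j=\tfrac{1}{16n}$ (constant in $j$) instead of your optimized $\eta_k$, which works precisely because of the specific decay rate of the entropy.

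The gap in your write-up is the form of the entropy bound you quote. Theorem~\ref{T2.2} (the $d=2$ result from \cite{VT156}) gives
\[
\e_k(\Tr(Q_n)_1,L_\infty)\ll n^{1/2}\,\frac{|Q_n|}{k}\,\log\frac{4|Q_n|}{k},\qquad k\le 2|Q_n|,
\]
i.e.\ decay of order $k^{-1}$ with a factor $|Q_n|$, not the $|Q_n|^{1/2}k^{-1/2}$ you wrote. (The $L_1\to L_\infty$ Nikol'skii constant for $\Tr(Q_n)$ is of order $2^n\asymp|Q_n|/n$, not $|Q_n|^{1/2}$.) This changes the chaining computation qualitatively: with the correct bound the summands $\sqrt{\e_{2^k}\cdot 2^{k+1}}$ are essentially constant in $k$ (up to the logarithm), so the sum is \emph{not} a geometric series in $2^{k/4}$ but roughly $n$ equal terms of size $(n^{3/2}|Q_n|)^{1/2}$, and squaring yields $m\gtrsim n^2\cdot n^{3/2}|Q_n|=|Q_n|n^{7/2}$. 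With your stated $k^{-1/2}$ decay the computation you sketched would produce only $m\gtrsim|Q_n|n^{\alpha}$, not $|Q_n|n^{2\alpha+1}$, so the arithmetic in your last paragraph does not close either. Once you replace the entropy estimate by the actual Theorem~\ref{T2.2}, your chaining and the paper's constant-$\eta_j$ version give the same exponent for the same reason.
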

\begin{proof} Proposition \ref{P2.1} plays an important role in our proof. It is used in the proof of the bound on the probability of the event 
$\{\sup_{f\in W}|L^1_\bz(f)|\ge \eta\}$ for a function class $W$. The corresponding proof is
  in terms of the entropy numbers of $W$. We now introduce the corresponding definitions. 
  
  Let $X$ be a Banach space and let $B_X$ denote the unit ball of $X$ with the center at $0$. Denote by $B_X(y,r)$ a ball with center $y$ and radius $r$: $\{x\in X:\|x-y\|\le r\}$. For a compact set $A$ and a positive number $\e$ we define the covering number $N_\e(A)$
 as follows
$$
N_\e(A) := N_\e(A,X) 
:=\min \{n : \exists y^1,\dots,y^n, y^j\in A :A\subseteq \cup_{j=1}^n B_X(y^j,\e)\}.
$$
It is convenient to consider along with the entropy $H_\e(A,X):= \log_2 N_\e(A,X)$ the entropy numbers $\e_k(A,X)$:
$$
\e_k(A,X)  :=\inf \{\e : \exists y^1,\dots ,y^{2^k} \in A : A \subseteq \cup_{j=1}
^{2^k} B_X(y^j,\e)\}.
$$
In our definition of $N_\e(A)$ and $\e_k(A,X)$ we require $y^j\in A$. In a standard definition of $N_\e(A)$ and $\e_k(A,X)$ this restriction is not imposed. 
However, it is well known (see \cite{Tbook}, p.208) that these characteristics may differ at most by a factor $2$. 

We consider the case $X$ is 
 $\C(\Omega)$ the space of functions continuous on a compact subset $\Omega$ of $\bR^d$ with the norm
$$
\|f\|_\infty:= \sup_{\bx\in \Omega}|f(\bx)|.
$$
  We use the abbreviated notation
$$
  \e_n(W):= \e_n(W,\C).
$$
In our case $d=2$ and
\be\label{2.3}
W:=W(n) := \{t\in \Tr(Q_n): \|t\|_1 = 1/2\}.
\ee
The following result on the entropy numbers for the $W$ is from \cite{VT156}.
Denote
$$
\Tr(Q)_q := \{t\in \Tr(Q): \|t\|_q\le 1\}.
$$
\begin{Theorem}\label{T2.2} We have for $d=2$
$$
\e_k(\Tr( Q_n)_1,L_\infty)\le 2\e_k := 2C_4  \left\{\begin{array}{ll} n^{1/2}(| Q_n|/k) \log (4| Q_n|/k), &\quad k\le 2| Q_n|,\\
n^{1/2}2^{-k/(2| Q_n|)},&\quad k\ge 2| Q_n|.\end{array} \right.
$$
\end{Theorem}
We note that by the following known (see \cite{Tmon}) Nikol'skii-type inequality for the hyperbolic cross polynomials: 
\be\label{2.9b}
\|f\|_\infty \le C(d)2^n \|f\|_1,\qquad f\in \Tr(Q_n)
\ee
we get a bound $\e_k(\Tr( Q_n)_1,L_\infty\le C(d)2^n$ for all $k$. For small $k$ this bound is better than the one provided by Theorem \ref{T2.2}. However, this  improvement of Theorem \ref{T2.2} for small $k$ does not affect our bounds. For convenience we use Theorem \ref{T2.2} in the above form. 

Specify $\eta=1/4$.
Denote $\de_j := \e_{2^j}$, $j=0,1,\dots$, and consider minimal $\de_j$-nets ${\mathcal N}_j \subset W$ of $W$ in $\C(\T^2)$. We use the notation $N_j:= |{\mathcal N}_j|$. Let $J$ be the minimal $j$ satisfying $\de_j \le 1/16$. For $j=1,\dots,J$ we define a mapping $A_j$ that associates with a function $f\in W$ a function $A_j(f) \in {\mathcal N}_j$ closest to $f$ in the $\C$ norm. Then, clearly,
$$
\|f-A_j(f)\|_\C \le \de_j.  
$$
We use the mappings $A_j$, $j=1,\dots, J$ to associate with a function $f\in W$ a sequence (a chain) of functions $f_J, f_{J-1},\dots, f_1$ in the following way
$$
f_J := A_J(f),\quad f_j:=A_j(f_{j+1}),\quad j=1,\dots,J-1.
$$
Let us find an upper bound for $J$, defined above. Certainly, we can carry out the proof under assumption that $C_4\ge 1$. Then the definition of $J$ implies that $2^J\ge 2|Q_n|$ and
\be\label{2.4}
C_4n^{1/2}2^{-2^{J-1}/(2| Q_n|)} \ge 1/16.
\ee
We derive from (\ref{2.4})
\be\label{2.5}
2^J \le 4|Q_n|+ C\log n,\qquad J \le n+C\log n \le 2n
\ee
for sufficiently large $n\ge C$. 

Set 
$$
\eta_j := \frac{1}{16n},\quad j=1,\dots,J.
$$

We now proceed to the estimate of $\mu^m\{\bz:\sup_{f\in W}|L^1_\bz(f)|\ge 1/4\}$. First of all   by the following simple  inequality (\ref{P2.2})   the assumption $\de_J\le 1/16$ implies that if $|L^1_\bz(f)| \ge 1/4$ then $|L^1_\bz(f_J)|\ge 1/8$.  
\be\label{P2.2}  
|L^1_\bz(f_1)-L^1_\bz(f_2)| \le 2\delta\quad\text{provided}\quad \|f_1-f_2\|_\infty\le \delta .
\ee
Rewriting 
$$
L^1_\bz(f_J) = L^1_\bz(f_J)-L^1_\bz(f_{J-1}) +\dots+L^1_\bz(f_{I+1})-L^1_\bz(f_1)+L^1_\bz(f_1)
$$
we conclude that if $|L^1_\bz(f)| \ge 1/4$ then at least one of the following events occurs:
$$
|L^1_\bz(f_j)-L^1_\bz(f_{j-1})|\ge \eta_j\quad\text{for some}\quad j\in (1,J] \quad\text{or}\quad |L^1_\bz(f_1)|\ge \eta_1.
$$
Therefore
\begin{eqnarray}\label{2.6}
\mu^m\{\bz:\sup_{f\in W}|L^1_\bz(f)|\ge1/4\}
\le \mu^m\{\bz:\sup_{f\in {\mathcal N}_1}|L^1_\bz(f)|\ge\eta_1\} \nonumber \\
+\sum_{j\in(1,J]}\sum_{f\in {\mathcal N}_j}\mu^m
\{\bz:|L^1_\bz(f)-L^1_\bz(A_{j-1}(f))|\ge\eta_j\}\nonumber\\
\le \mu^m\{\bz:\sup_{f\in {\mathcal N}_1}|L^1_\bz(f)|\ge\eta_1\}\nonumber\\
+\sum_{j\in(1,J]} N_j\sup_{f\in W}\mu^m
\{\bz:|L^1_\bz(f)-L^1_\bz(A_{j-1}(f))|\ge\eta_j\}. 
\end{eqnarray}
 Applying  Proposition \ref{P2.1} we obtain
$$
\sup_{f\in W} \mu^m\{\bz:|L^1_\bz(f)-L^1_\bz(A_{j-1}(f))|\ge \eta_j\} \le 2\exp\left(-\frac{m\eta_j^2}{16\de_{j-1}}\right).
$$

We now make further estimates for a specific $m=C_1|Q_n|n^{7/2}$ with large 
enough $C_1$. For $j$ such that $2^j\le 2|Q_n|$ we obtain from the definition of 
$\delta_j$
$$
\frac{m\eta_j^2}{\delta_{j-1}} \ge \frac{C_1n^{3/2}2^{j-1}}{C_5n^{3/2} } \ge \frac{C_1}{2C_5}2^j.
$$
By our choice of $\delta_j=\e_{2^j}$ we get $N_j\le 2^{2^j} <e^{2^j}$ and, therefore,
\be\label{2.7}
N_j\exp\left(-\frac{m\eta_j^2}{16\de_{j-1}}\right)\le \exp(-2^j)
\ee
for sufficiently large $C_1$.

In the case $2^j\in (2|Q_n|, 2^J]$ we have
$$
\frac{m\eta_j^2}{\delta_{j-1}} \ge \frac{C_1|Q_n|n^{3/2}}{C_6n^{1/2}2^{-2^{j-1}/(2|Q_n|)}} \ge \frac{C_1}{C_6}n|Q_n| \ge 2^{J+1}
$$
 for sufficiently large $C_1$ and
\be\label{2.8}
N_j\exp\left(-\frac{m\eta_j^2}{16\de_{j-1}}\right)\le \exp(-2^j).
\ee

We now estimate $\mu^m\{\bz:\sup_{f\in {\mathcal N}_1}|L^1_\bz(f)|\ge\eta_1\}$.
We use Lemma \ref{L2.1} with $g_j(\bz) = |f(\bx^j)|-\|f\|_1$. To estimate $\|g_j\|_\infty$ it is sufficient to use the following trivial Nikol'skii-type inequality for the 
hyperbolic cross polynomials: 
\be\label{2.9}
\|f\|_\infty \le |Q_n| \|f\|_1,\qquad f\in \Tr(Q_n).
\ee
We note that it is known (see \cite{Tmon}) that inequality (\ref{2.9}) can be improved by replacing $|Q_n|$ by $C(d)2^n$. Then Lemma \ref{L2.1} gives
$$
\mu^m\{\bz:\sup_{f\in {\mathcal N}_1}|L^1_\bz(f)|\ge\eta_1\}\le 2N_1\exp\left(-\frac{m\eta_1^2}{C|Q_n|}\right) \le 1/4
$$
($N_1=4$ here) for sufficiently large $C_1$. Substituting the above estimates into (\ref{2.6}) we obtain
$$
\mu^m\{\bz:\sup_{f\in W}|L^1_\bz(f)|\ge1/4\} <1.
$$
Therefore, there exists $\bz_0=(\xi^1,\dots,\xi^m)$ such that for any $f\in W$ we have 
$$
|L^1_{\bz_0}(f)| \le 1/4.
$$
Taking into account that $\|f\|_1=1/2$ for $f\in W$ we obtain the statement of Theorem \ref{T2.1} with $C_2 =1/2$, $C_3=3/2$.
 
\end{proof}

We presented above a detailed proof of Theorem \ref{T2.1}. This theorem only applies to the case $d=2$. The reason for this limitation is the use of Theorem 
\ref{T2.2}, which is proved in \cite{VT156} only for $d=2$. In Section \ref{entropy}
we prove a weaker version of Theorem \ref{T2.2} that holds for all $d$. Replacing Theorem \ref{T2.2} by Theorem \ref{T6.3} in the proof of Theorem \ref{T2.1} we obtain the following result for all $d$. We point out that for $d=2$ 
Theorem \ref{T2.3} is weaker than Theorem \ref{T2.1}.

\begin{Theorem}\label{T2.3} For any $n\in \N$ there exists a set of $m \le C_1(d)|Q_n|n^{d/2+3}$ points $\xi^j\in \T^d$, $j=1,\dots,m$ such that for any $f\in \Tr(Q_n)$ 
we have
$$
C_2\|f\|_1 \le \frac{1}{m}\sum_{j=1}^m |f(\xi^j)| \le C_3\|f\|_1.
$$
\end{Theorem}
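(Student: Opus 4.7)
The plan is to follow the proof of Theorem \ref{T2.1} essentially verbatim, substituting the $d$-variate entropy bound of Theorem \ref{T6.3} (proved in Section \ref{entropy}) for the bivariate bound of Theorem \ref{T2.2}. I would again set up the class $W = W(n) := \{t \in \Tr(Q_n) : \|t\|_1 = 1/2\}$, take $\delta_j := \e_{2^j}(\Tr(Q_n)_1, L_\infty)$ using the new estimate, build minimal $\delta_j$-nets $\mathcal{N}_j \subset W$ in $\C(\T^d)$, define the mappings $A_j$ and the chains $f_J, f_{J-1}, \dots, f_1$ associated to each $f \in W$ just as in the bivariate proof, and choose $J$ as the smallest index with $\delta_J \le 1/16$. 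The new entropy estimate is weaker than the one from Theorem \ref{T2.2} by a factor of roughly $n^{(d-2)/2}$ in the regime $2^j \le 2|Q_n|$ and has an analogously shifted tail in the exponential-decay regime $2^j > 2|Q_n|$; the bound $J \le 2n$ for large $n$ still follows because the decay kicks in at $k \sim 2|Q_n|$.

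Next, I would specify $m = C_1(d)|Q_n|n^{d/2+3}$ with $C_1(d)$ large and $\eta_j := 1/(16n)$, and run the chaining argument leading to the analogue of (\ref{2.6}): if $|L^1_\bz(f)| \ge 1/4$ then either $|L^1_\bz(f_1)| \ge \eta_1$ or $|L^1_\bz(f_j) - L^1_\bz(f_{j-1})| \ge \eta_j$ for some $j \in (1,J]$. Applying Proposition \ref{P2.1} link by link, one checks that in both ranges $2^j \le 2|Q_n|$ and $2^j \in (2|Q_n|, 2^J]$ the quotient $m\eta_j^2/\delta_{j-1}$ exceeds $2^j$ by a factor proportional to $n$, which gives
$$
N_j \exp\left(-\frac{m\eta_j^2}{16\delta_{j-1}}\right) \le \exp(-2^j).
$$
The base level $\mathcal{N}_1$ is handled by Lemma \ref{L2.1} with $M \le |Q_n|$ coming from the trivial Nikol'skii inequality (\ref{2.9}). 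Summing $\sum_j \exp(-2^j)$ together with the base contribution gives total probability $< 1$, so one picks $\bz_0 = (\xi^1,\dots,\xi^m)$ with $\sup_{f \in W} |L^1_{\bz_0}(f)| \le 1/4$, and concludes with $C_2 = 1/2$, $C_3 = 3/2$ exactly as in the proof of Theorem \ref{T2.1}.

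The main obstacle, and essentially the only one, is arithmetic bookkeeping: tracking how the extra factor $n^{(d-2)/2}$ in $\delta_j$ from Theorem \ref{T6.3}, combined with the $\eta_j^{-2} \sim n^2$ factor in Proposition \ref{P2.1} and the extra $n$ margin needed to beat $2^j$ in the exponent, combine to force the exponent $d/2+3$ on $n$ in the size of $m$. Since for $d=2$ the analogous computation produced $n^{7/2}$, the weakening in the entropy exactly accounts for the shift from $7/2$ to $d/2+3$ (with a bit of room to absorb logarithmic factors in the tail of the entropy bound). No new conceptual ingredient beyond the proof of Theorem \ref{T2.1} is required; the substantive new input is the higher-dimensional entropy bound supplied by Section \ref{entropy}.
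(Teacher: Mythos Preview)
Your proposal is correct and matches the paper's own argument exactly: the paper simply says ``Replacing Theorem \ref{T2.2} by Theorem \ref{T6.3} in the proof of Theorem \ref{T2.1} we obtain the following result for all $d$,'' and your outline carries this out with the right choice $m = C_1(d)|Q_n|n^{d/2+3}$. (One trivial slip: the entropy bound weakens by a factor $n^{(d-1)/2}$, not $n^{(d-2)/2}$, which is precisely the shift from $7/2$ to $d/2+3$; this does not affect your argument.)
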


\section{The Marcinkiewicz-type theorem in $L_q$, $1<q<\infty$. Probabilistic technique}
\label{Lq}

In this section we demonstrate how the technique developed in Section \ref{L1}
can be extended to the case $1<q\le 2$. Theorem \ref{T2.3} shows that in the case $q=1$ the probabilistic technique provides existence of sets of points $\xi^j\in \T^d$, $j=1,\dots,m$, with $m \le C_1(d)|Q_n|n^{d/2+3}$, for which the 
Marcinkiewicz-type theorem holds in $\Tr(Q_n)$. Clearly, it must be $m\ge |Q_n| =\dim \Tr(Q_n)$. Thus the upper bound for $m$ from Theorem \ref{T2.3} differs from the trivial lower bound by an extra factor, which is a log-type factor in terms of 
$|Q_n|$. The results for $1<q\le 2$, which we present in this section are not of this form. They only guarantee that $m\le C(d,q)|Q_n|^{2-1/q}n^a$. However, we do not know if it could be improved to $m\le C(d,q)|Q_n| n^a$ for $q\neq 2$. We now proceed 
to the detailed presentation. We need the following version of Proposition \ref{P2.1}.

\begin{Proposition}\label{P3.1} Let $f_j\in L_q(\Omega)$, $1<q<\infty$, be such that
$$
\|f_j\|_q \le 1/2,\quad \|f_j\|_\infty \le M_q, \quad j=1,2;\qquad \|f_1-f_2\|_\infty \le \delta.
$$
Then
\be\label{3.2}
\mu^m\{\bz: |L^q_\bz(f_1) -L^q_\bz(f_2)| \ge \eta\} < 2\exp\left(-\frac{m\eta^2}{C(q)M_q^{q-1}\delta}\right).
\ee
\end{Proposition}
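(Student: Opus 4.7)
The plan is to follow the proof of Proposition \ref{P2.1} essentially verbatim, replacing $|f|$ by $|f|^q$ throughout and feeding the resulting pair of norm estimates into Lemma \ref{L2.1}. First I would introduce the centered difference
$$
g(\bx) := \bigl(|f_1(\bx)|^q - \|f_1\|_q^q\bigr) - \bigl(|f_2(\bx)|^q - \|f_2\|_q^q\bigr),
$$
which automatically has $\int g\, d\mu = 0$, and set $g_j(\bz) := g(\bx^j)$ for an i.i.d.\ sample $\bx^1,\dots,\bx^m$ drawn from $\mu$. By construction $\frac{1}{m}\sum_{j=1}^m g_j = L^q_\bz(f_1) - L^q_\bz(f_2)$, so bounding the probability in \eqref{3.2} reduces to applying Lemma \ref{L2.1} to the $g_j$.

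The two inputs Lemma \ref{L2.1} demands are a uniform $L_1$ bound (by $2$) and a uniform $L_\infty$ bound on the $g_j$. For the $L_1$ bound, the triangle inequality together with $\|f_j\|_q \le 1/2$ and $q>1$ gives
$$
\|g\|_1 \le 2\|f_1\|_q^q + 2\|f_2\|_q^q \le 4\cdot(1/2)^q \le 2.
$$
For the $L_\infty$ bound, I would use the elementary mean-value inequality $|a^q - b^q| \le q\max(a,b)^{q-1}|a-b|$ for $a,b \ge 0$; combined with $\|f_j\|_\infty \le M_q$ and $\|f_1-f_2\|_\infty \le \delta$ this yields $\||f_1|^q - |f_2|^q\|_\infty \le qM_q^{q-1}\delta$, and integrating transfers the same bound to $|\|f_1\|_q^q - \|f_2\|_q^q|$, so that $\|g\|_\infty \le 2qM_q^{q-1}\delta$.

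Plugging these into Lemma \ref{L2.1} with $M = 2qM_q^{q-1}\delta$ produces the exponent $-m\eta^2/(8M) = -m\eta^2/(16qM_q^{q-1}\delta)$, which is precisely \eqref{3.2} with $C(q) = 16q$. There is no real obstacle here: the only place where $q$ enters non-trivially is the pointwise mean-value inequality, and the $L_1$ bound on $g$ is independent of $\delta$ precisely because each half of $g$ is controlled in $L_1$ by the $L_q$ hypothesis $\|f_j\|_q \le 1/2$ rather than by the closeness hypothesis $\|f_1-f_2\|_\infty \le \delta$ (which is what drives the sharper $L_\infty$ bound).
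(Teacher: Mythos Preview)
Your proof is correct and follows exactly the same approach as the paper: you introduce the same centered difference $g$, bound $\|g\|_1\le 2$ via the $L_q$ hypothesis, bound $\|g\|_\infty$ by $C_1(q)M_q^{q-1}\delta$, and apply Lemma~\ref{L2.1}. You are in fact slightly more explicit than the paper, spelling out the mean-value inequality for $|a^q-b^q|$ and obtaining the concrete constant $C(q)=16q$, whereas the paper simply writes $\|g\|_\infty\le C_1(q)M_q^{q-1}\delta$ without justification.
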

\begin{proof} Consider the function
$$
g(\bx) := |f_1(\bx)|^q-\|f_1\|_q^q - (|f_2(\bx)|^q-\|f_2\|_q^q).
$$
Then $\int g(\bx)d\mu =0$ and
$$
\|g\|_1 \le 2\|f_1\|_q^q +2\|f_2\|_q^q \le 2,
$$
$$
 \|g\|_\infty \le C_1(q)M^{q-1}_q\delta.
$$
Consider $m$ independent variables $\bx^j\in \Omega$, $j=1,\dots,m$. For $\bz\in \Omega^m$ define $m$ independent random variables $g_j(\bz)$ as $g_j(\bz) := g(\bx^j)$. Clearly,
$$
\frac{1}{m}\sum_{j=1}^m g_j = L^q_\bz(f_1)-L^q_\bz(f_2).
$$
Applying Lemma \ref{L2.1} with $M=C_1(q)M^{q-1}_q\delta$ to $\{g_j\}$ we obtain (\ref{3.2}).
\end{proof}

We use the following known result on the Nikol'skii-type inequalities, which gives 
an upper bound on $M_q$ (see \cite{Tmon}). 

\begin{Theorem}\label{T3.1} Suppose that $1\le q<\infty$. Then
$$
\sup_{f\in \Tr(Q_n)} \|f\|_\infty/\|f\|_q \asymp 2^{n/q}n^{(d-1)(1-1/q)}.
$$
\end{Theorem}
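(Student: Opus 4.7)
The plan is to prove the two sides of the asymptotic equivalence separately, using a de la Vallée Poussin–type reproducing kernel for $Q_n$ as the central tool.

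For the upper bound I would construct a kernel $\mathcal V_n$ whose Fourier coefficients equal $1$ on $Q_n$ (so that $\mathcal V_n \ast f = f$ for every $f\in\Tr(Q_n)$) and which is supported in frequency inside a bounded enlargement of $Q_n$. The standard choice is a telescoping sum of tensor products of univariate de la Vallée Poussin kernels,
$$
\mathcal V_n(\bx) \;=\; \sum_{\bs\in\Z^d_+,\ \|\bs\|_1\le n}\ \prod_{j=1}^d \bigl(V_{2^{s_j}}(x_j)-V_{2^{s_j-1}}(x_j)\bigr),
$$
where $V_N$ is the univariate de la Vallée Poussin kernel (with $\|V_N\|_1\le C$ and $\|V_N\|_\infty\asymp N$) and $V_{2^{-1}}\equiv 0$. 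Young's inequality then gives $\|f\|_\infty\le\|\mathcal V_n\|_{q'}\|f\|_q$, so the upper bound reduces to
$$
\|\mathcal V_n\|_{q'}\;\lesssim\; 2^{n/q}\, n^{(d-1)(1-1/q)}.
$$
Each summand $A_\bs$ satisfies $\|A_\bs\|_1\asymp 1$ and $\|A_\bs\|_\infty\asymp 2^{\|\bs\|_1}$, so by Hölder interpolation $\|A_\bs\|_{q'}\asymp 2^{\|\bs\|_1/q}$; the remaining task is to sum over $\bs$ without losing a spurious factor.

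For the lower bound I would test with
$$
f^\ast(\bx) \;=\; \sum_{\bs\in\Z^d_+,\ \|\bs\|_1=n}\ \prod_{j=1}^d V_{2^{s_j}}(x_j),
$$
whose Fourier support lies in $Q_n$. At $\bx=\mathbf 0$ the $\asymp n^{d-1}$ non-negative contributions add up to $f^\ast(\mathbf 0)\asymp 2^n n^{d-1}$. The companion bound $\|f^\ast\|_q\lesssim 2^{n/q'} n^{(d-1)/q}$ then delivers the matching ratio; I would obtain it from the per-block estimate $\|\prod_j V_{2^{s_j}}\|_q\asymp 2^{\|\bs\|_1/q'}=2^{n/q'}$ combined with an $\ell^q$-aggregation over the $n^{d-1}$ blocks, which are disjoint in frequency (Littlewood–Paley for $q\ge 2$, Minkowski/duality for $q\le 2$).

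The main obstacle is the kernel estimate $\|\mathcal V_n\|_{q'}\lesssim 2^{n/q}n^{(d-1)(1-1/q)}$ at the endpoint $q=1$ (equivalently $q'=\infty$): the naive triangle inequality over $\bs$ produces a spurious $n^{d-1}$ factor, whereas the correct rate is $\|\mathcal V_n\|_\infty\asymp 2^n$. The remedy is to exploit the spatial localisation built into the telescoping differences $V_{2^{s_j}}-V_{2^{s_j-1}}$, which concentrate around dyadic annuli, so that at any fixed $\bx$ only $O(1)$ of the summands $A_\bs(\bx)$ are close to their peak while the others decay rapidly in $\bs$. Intermediate $q$ then follows either by a direct $L^{q'}$ computation on this telescoping sum or by Riesz–Thorin interpolation between $q=1$ and the easy $q=2$ case, where Parseval yields $\|\mathcal V_n\|_2\asymp|Q_n|^{1/2}\asymp 2^{n/2}n^{(d-1)/2}$.
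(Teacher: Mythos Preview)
The paper does not prove Theorem~\ref{T3.1}; it is quoted as a known Nikol'skii-type inequality from \cite{Tmon}, so there is no in-paper argument to compare against.

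Your scheme is the standard one and it does cover part of the range, but the step you flag as ``the main obstacle'' is a genuine error, not a technicality. You assert that the correct rate is $\|\mathcal V_n\|_\infty\asymp 2^n$ and that spatial localisation of the blocks $A_\bs$ will produce it. Both claims are false. Each univariate difference $V_{2^{s_j}}-V_{2^{s_j-1}}$ has non-negative Fourier coefficients and therefore peaks at the origin with value $\asymp 2^{s_j}$; hence $A_\bs(\mathbf 0)\asymp 2^{\|\bs\|_1}$ for \emph{every} $\bs$, and
\[
\mathcal V_n(\mathbf 0)\;=\;\sum_{\|\bs\|_1\le n}A_\bs(\mathbf 0)\;\asymp\;\sum_{\|\bs\|_1\le n}2^{\|\bs\|_1}\;\asymp\;2^n n^{d-1}\;\asymp\;|Q_n|.
\]
The dyadic-annulus localisation you invoke lives in \emph{frequency} space, not in physical space; in $\bx$-space all the blocks pile up at $\mathbf 0$. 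Consequently Young's inequality with this kernel yields only $\|f\|_\infty\lesssim 2^n n^{d-1}\|f\|_1$, missing the sharp $q=1$ bound by $n^{d-1}$, and interpolation from this wrong $L^\infty$ endpoint contaminates the whole range $1\le q<2$. (For $q\ge 2$ one can interpolate instead between $\|\mathcal V_n\|_1\lesssim n^{d-1}$, obtained by telescoping all but one index, and $\|\mathcal V_n\|_2\asymp|Q_n|^{1/2}$; that \emph{does} give the correct $2^{n/q}n^{(d-1)/q'}$, but you did not set up this endpoint.) The sharp $q=1$ inequality $\|f\|_\infty\le C(d)2^n\|f\|_1$---quoted in the paper as~(\ref{2.9b})---requires a more delicate construction than the naive de~la~Vall\'ee~Poussin hyperbolic kernel; see \cite{Tmon}.

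A smaller issue on the lower bound: in your test function $f^\ast=\sum_{\|\bs\|_1=n}\prod_j V_{2^{s_j}}$ the summands are \emph{not} disjoint in frequency (each contains all low frequencies, in particular $\bk=\mathbf 0$), so the Littlewood--Paley/$\ell^q$-aggregation you appeal to is not available as stated. Replacing the summands by the telescoped blocks $\prod_j(V_{2^{s_j}}-V_{2^{s_j-1}})$ restores essential disjointness and the argument can then be made to work.
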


Using the same argument as in the proof of Theorem \ref{T2.1} we need to bound from below $\frac{m\eta_j^2}{M^{q-1}_q\delta_{j-1}}$. 
For $\delta_{j-1}$ we use the following known result from \cite{VT156} (see Theorem 7.3 there).

\begin{Theorem}\label{T3.2} Let $1<q\le 2$. Then
$$
\e_k(\Tr( Q_n)_q,L_\infty) \ll  \left\{\begin{array}{ll} n^{1/2}(| Q_n|/k)^{1/q} (\log (4| Q_n|/k))^{1/q}, &\quad k\le 2| Q_n|,\\
n^{1/2}2^{-k/(2| Q_n|)},&\quad k\ge 2| Q_n|.\end{array} \right.
$$
\end{Theorem}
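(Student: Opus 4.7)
The plan is to prove the stated entropy bound by a probabilistic construction of $\e$-nets, treating the two regimes $k\le 2|Q_n|$ and $k\ge 2|Q_n|$ separately. The small-$k$ regime uses a Maurey-type empirical (random sparse) approximation of $f$ combined with a thresholding of the Fourier coefficients of $f$, while the large-$k$ regime reduces to a standard volume/ball-covering argument in the finite-dimensional space $\Tr(Q_n)$.

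For the small-$k$ regime, fix $f\in\Tr(Q_n)$ with $\|f\|_q\le 1$. By Hausdorff--Young $\|\hat f\|_{q'}\le 1$ with $q'=q/(q-1)\ge 2$. Threshold the Fourier coefficients at a level $\tau\sim(|Q_n|/k)^{1/q}$, writing $f=f_{\mathrm{big}}+f_{\mathrm{small}}$, where $f_{\mathrm{big}}$ collects the frequencies with $|\hat f(\bk)|>\tau$. Since $\tau^{q'}|\supp \hat f_{\mathrm{big}}|\le\|\hat f\|_{q'}^{q'}\le 1$, the big part has at most $\tau^{-q'}\lesssim k$ nonzero frequencies and is kept exactly; the small part satisfies $\|\hat f_{\mathrm{small}}\|_\infty\le\tau$. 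Approximate $f_{\mathrm{small}}$ by a random $k$-term trigonometric polynomial $g$ obtained by i.i.d.\ Maurey sampling of frequencies from a distribution $p$ adapted to $f$. Pointwise Bernstein (Lemma \ref{L2.2}) yields a subgaussian tail for $g(\bx)-f_{\mathrm{small}}(\bx)$; to pass to uniform $L_\infty$ control, evaluate on a $c\,2^{-n}$-dense grid in $\T^d$ (cardinality $\lesssim 2^{nd}$), apply a union bound (which contributes the prefactor $\sqrt{\log 2^{nd}}\asymp n^{1/2}$), and invoke the classical Bernstein inequality for derivatives of polynomials in $\Tr(Q_n)$ to return from the discrete supremum to $\|\cdot\|_\infty$. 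Optimizing in $\tau$ produces, with positive probability, a $k$-sparse polynomial for which $\|f-(f_{\mathrm{big}}+g)\|_\infty\lesssim n^{1/2}(|Q_n|/k)^{1/q}(\log(4|Q_n|/k))^{1/q}$. The required $\e$-net is then built by enumerating the $\binom{|Q_n|}{k}\le(e|Q_n|/k)^k$ possible supports and quantizing each coefficient on a geometric grid of scale $\tau$; the log-cardinality $\lesssim k\log(4|Q_n|/k)$ is of the right order.

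For the large-$k$ regime, the bound $\e_{2|Q_n|}\lesssim n^{1/2}$ from the previous step serves as an effective $L_\infty$-diameter of $\Tr(Q_n)_q$ inside the $|Q_n|$-dimensional space $\Tr(Q_n)$. A standard volume/ball-covering argument in $(\Tr(Q_n),\|\cdot\|_\infty)$, in which each halving of the radius has log-cost $O(|Q_n|)$, then yields $\e_{2|Q_n|+j|Q_n|}\lesssim n^{1/2}\,2^{-j}$, equivalent to the claimed $n^{1/2}\,2^{-k/(2|Q_n|)}$.

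The main obstacle will be calibrating the sampling distribution $p_\bk$ and the threshold $\tau$ so that the Maurey variance and sup-norm bounds on each summand scale correctly in $q$ and together produce the exponent $1/q$ in the logarithmic factor, rather than the $1/2$ that a single-scale Maurey argument with a single distribution naturally produces. Most likely this requires either a layer-cake decomposition of $\hat f$ by dyadic magnitude of the coefficients, with a separate Maurey approximation and budget on each layer, or a block decomposition $Q_n=\cup_{\|\bs\|_1\le n}\rho(\bs)$ exploiting the block-wise Nikol'skii inequality $\|f_\bs\|_\infty\lesssim 2^{\|\bs\|_1/q}\|f_\bs\|_q$ together with a careful aggregation of errors and log-factors across blocks or layers.
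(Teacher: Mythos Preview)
The paper does not prove Theorem~\ref{T3.2}; it is quoted from \cite{VT156}. The method there, sketched in Section~\ref{entropy} for the related $q=1$ case, is quite different from your plan: it passes to the wavelet-type orthonormal basis $\V$ of Offin--Oskolkov (see (\ref{O3})--(\ref{O4})), proves best $m$-term approximation bounds for $\Tr(Q_n)_q$ in that basis via greedy algorithms (cf.\ Lemma~\ref{L6.2}), and then invokes general Carl-type inequalities to convert $m$-term rates into entropy numbers. The key advantage of the basis $\V$ over the exponentials is Lemma~\ref{LO1}, which bounds $\|\cdot\|_\infty$ on a dyadic block $\rho^+(\bs)$ by the $\ell_\infty$ norm of the coefficients; the Fourier basis has no such blockwise uniform bound, and this is exactly what makes the $m$-term route work.

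Your large-$k$ volume argument is standard and fine. For small $k$, however, the gap you yourself flag is genuine and unresolved. A single-scale Maurey sampling with the optimal distribution $p_\bk\propto|\hat f(\bk)|$ gives an $L_\infty$ error of order $\|\hat f\|_1\sqrt{n/k}$, and Hausdorff--Young only yields $\|\hat f\|_1\le|Q_n|^{1/q}$; the resulting bound $n^{1/2}|Q_n|^{1/q}k^{-1/2}$ is, for $q<2$, weaker than the target $n^{1/2}(|Q_n|/k)^{1/q}$ by the factor $k^{1/q-1/2}$. Your suggested remedies --- a layer-cake decomposition by coefficient magnitude, or a block decomposition along $\rho(\bs)$ with blockwise Nikol'skii --- are not carried out, and it is not clear either succeeds in the Fourier basis: the block route is precisely where \cite{VT156} needs Lemma~\ref{LO1} and hence the switch to $\V$, and the layer-cake route would still require a multi-scale budget allocation and a matching net-size count that you have not supplied. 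As written, the proposal is a plausible outline with the decisive step missing.
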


Thus, we see that, for instance, for $2^j\le 2|Q_n|$ we have
\be\label{3.3}
\frac{m\eta_j^2}{M^{q-1}_q\delta_{j-1}} \gg \frac{m 2^{(j-1)/q}}{n^22^{n(1-1/q)}n^{(d-1)(q-2+1/q)}n^{1/2}|Q_n|^{1/q}n^{1/q}}.
\ee
Therefore, choosing $m=C_1(d,q)|Q_n|^{2-1/q}n^a$ with large enough $a$ we can get from (\ref{3.3})
$$
\frac{m\eta_j^2}{M^{q-1}_q\delta_{j-1}} \ge C(d,q)2^j.
$$
This leads us to the following Theorem \ref{T3.3} in the case $1<q\le 2$. We are limited to the case $1<q\le 2$ because Theorem \ref{T3.2} is proved in \cite{VT156} for $1<q\le 2$. In the case $2<q<\infty$ it is not difficult to derive an analog of Theorem \ref{T3.2} with the factor $n^{1/2}$ replaced by $n^{c(d)}$ with some $c(d)$. This gives Theorem \ref{T3.3} in the case $2<q<\infty$.

\begin{Theorem}\label{T3.3} Let $1<q<\infty$. There are numbers $C_1(d,q)$ and $a(d,q)$ such that for any $n\in \N$ there   exists a set of $m \le C_1(d,q)|Q_n|^{2-1/q}n^{a(d,q)}$ points $\xi^j\in \T^d$, $j=1,\dots,m$ such that for any $f\in \Tr(Q_n)$ 
we have
$$
C_2\|f\|_q^q \le \frac{1}{m}\sum_{j=1}^m |f(\xi^j)|^q \le C_3\|f\|_q^q.
$$
\end{Theorem}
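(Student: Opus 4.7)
My plan is to follow the chaining argument from the proof of Theorem \ref{T2.1} essentially verbatim, with three changes: use Proposition \ref{P3.1} instead of Proposition \ref{P2.1}, use the entropy bounds of Theorem \ref{T3.2} instead of Theorem \ref{T2.2}, and plug in the Nikol'skii-type bound $M_q \asymp 2^{n/q} n^{(d-1)(1-1/q)}$ from Theorem \ref{T3.1} for the sup-norm of $f\in W$. The proof will establish the statement for $1<q\le 2$; for $2<q<\infty$ I will invoke the analog of Theorem \ref{T3.2} mentioned in the text (with the factor $n^{1/2}$ replaced by $n^{c(d)}$) and rerun the same argument.

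Concretely, I set $W:=\{f\in \Tr(Q_n):\|f\|_q=1/2\}$, $\delta_j:=\e_{2^j}(\Tr(Q_n)_q, L_\infty)$ from Theorem \ref{T3.2}, and let $J$ be the least index with $\delta_J\le 1/16$. As in (\ref{2.5}) this gives $J\le 2n$ for large $n$. I then build the chain $f_J, f_{J-1},\dots, f_1$ via nearest-point maps $A_j:W\to {\mathcal N}_j$ into minimal $\delta_j$-nets in $C(\T^d)$, with $N_j\le e^{2^j}$. Using the elementary inequality $\bigl|\,|a|^q-|b|^q\bigr|\le q(\max\{|a|,|b|\})^{q-1}|a-b|$, I see that $\|f_1-f_2\|_\infty\le\delta$ combined with $\|f_i\|_\infty\le M_q$ yields $|L^q_\bz(f_1)-L^q_\bz(f_2)|\le C(q)M_q^{q-1}\delta$, so the telescoping argument of (\ref{2.6}) goes through with $\eta_j=1/(Cn)$ and the increment probability bounded via Proposition \ref{P3.1} by $2\exp(-m\eta_j^2/(C(q)M_q^{q-1}\delta_{j-1}))$.

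The central arithmetic step is to check that $m=C_1(d,q)|Q_n|^{2-1/q}n^{a(d,q)}$ forces $N_j\exp(-m\eta_j^2/(C(q)M_q^{q-1}\delta_{j-1}))\le\exp(-2^j)$ for all $j\in[1,J]$. In the regime $2^j\le 2|Q_n|$ this is exactly the estimate sketched in (\ref{3.3}): since $M_q^{q-1}\ll 2^{n(q-1)/q}n^{(d-1)(q-1)(1-1/q)}$ and $|Q_n|\asymp 2^n n^{d-1}$, the $2^{n(q-1)/q}$ appearing in the denominator is canceled by the factor $|Q_n|^{1-1/q}$ sitting in $m$ beyond the trivial $|Q_n|$, which is precisely why we need the exponent $2-1/q$ instead of $1$. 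In the regime $2^j\in(2|Q_n|, 2^J]$ the exponential decay in $\delta_{j-1}\ll n^{1/2}2^{-2^{j-1}/(2|Q_n|)}$ dominates and the bound follows as in (\ref{2.8}) once $a(d,q)$ is chosen large enough. The base term $\mu^m\{\bz:\sup_{f\in {\mathcal N}_1}|L^q_\bz(f)|\ge\eta_1\}$ is handled by a direct application of Lemma \ref{L2.1} to $g(\bx)=|f(\bx)|^q-\|f\|_q^q$, whose sup norm is controlled by $M_q^q$ and is absorbed into $C_1(d,q)$.

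The main obstacle is really bookkeeping rather than any new idea: one must track the several polynomial-in-$n$ losses coming from $M_q^{q-1}$, from $\delta_{j-1}$, from $\eta_j^{-2}$, and from the range of $J$, and then fix $a(d,q)$ large enough to dominate all of them simultaneously on both entropy regimes. The conceptual reason the bound degrades from $|Q_n|$ (as in Theorem \ref{NOUth}) to $|Q_n|^{2-1/q}$ is visible here: the Bernstein variance proxy in Proposition \ref{P3.1} is $M_q^{q-1}\delta$ rather than $\delta$, and the $L_\infty$ cost $M_q^{q-1}\asymp |Q_n|^{(q-1)/q}n^{\text{poly}}$ must be paid inside the exponent, which forces an extra $|Q_n|^{(q-1)/q}$ factor in $m$. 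This is also where the method is intrinsically suboptimal for $q\neq 1, 2$, as the paper remarks just after (\ref{1.6}). Finally, for $2<q<\infty$ the only additional ingredient needed is the analog of Theorem \ref{T3.2}; since this merely replaces $n^{1/2}$ by $n^{c(d)}$, the entire argument above runs identically with $a(d,q)$ enlarged accordingly.
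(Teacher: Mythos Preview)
Your proposal is correct and follows the paper's own (sketched) proof essentially verbatim: swap Proposition~\ref{P2.1} for Proposition~\ref{P3.1}, Theorem~\ref{T2.2} for Theorem~\ref{T3.2}, insert the Nikol'skii constant $M_q$ from Theorem~\ref{T3.1}, and rerun the chaining of Theorem~\ref{T2.1}. The only slip is the threshold defining $J$: since the pointwise Lipschitz bound is now $|L^q_\bz(f)-L^q_\bz(f_J)|\le C(q)M_q^{q-1}\delta_J$ (as you yourself note) rather than $2\delta_J$, one needs $J$ minimal with $C(q)M_q^{q-1}\delta_J\le 1/8$, not $\delta_J\le 1/16$; by the exponential decay in Theorem~\ref{T3.2} this still gives $2^J\ll n|Q_n|$ and hence $J\le C(q)n$, so the rest of your argument goes through unchanged after adjusting $\eta_j$ by a constant factor.
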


\section{The Marcinkiewicz-type theorem in $L_2$. Number theoretical technique}
\label{L2}

In this section we use the technique developed in \cite{VT27}. This technique is based on elementary number theoretical constructions, which go back to 
the construction of the Korobov cubature formulas (see, for instance, \cite{TBook}). On one hand this technique gives weaker results than the probabilistic technique developed in Sections \ref{L1} and \ref{Lq}. It is limited to 
$L_2$ and produces, for instance, for $\Tr(Q_n)$ a good set of the size $\le2d |Q_n|^2$. 
On the other hand this technique gives stronger results than the probabilistic technique. It  gives the equality between the $L_2$ norm and the discrete $\ell_2$ norm of a polynomial from $\Tr(Q)$. 

\begin{Lemma}\label{iL3.1} Let $p$  be a prime 
and $\L$ be a finite subset of $\Z^d$ such that
\be\label{i3.1}
|\L| < (p-1)/d .
\ee
Then there is a natural number $a\in I_p := [1,p)$ such that for all
$\mathbf m\in\L$, $\mathbf m\ne\mathbf 0$
\be\label{i3.2}
m_1 + am_2 +\dots+a^{d-1}m_d\not\equiv 0\qquad \pmod {p}.
\ee
\end{Lemma}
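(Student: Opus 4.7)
My plan is an elementary union-bound argument over $\mathbb{F}_p$. To each nonzero $\mathbf m \in \L$ I associate the polynomial
$$P_{\mathbf m}(x) := m_1 + m_2 x + \cdots + m_d x^{d-1} \in \mathbb{F}_p[x],$$
obtained by reducing its coefficients modulo $p$. Condition (\ref{i3.2}) says precisely that $a \in [1,p)$ is not a root of $P_{\mathbf m}$ in $\mathbb{F}_p$.

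Granting the tacit nondegeneracy hypothesis that no nonzero $\mathbf m \in \L$ has every coordinate divisible by $p$ (automatic in the intended Korobov-type applications, where $\L$ lies in a box much smaller than $p$), each $P_{\mathbf m}$ is a nonzero polynomial of degree at most $d-1$ and therefore has at most $d-1$ roots in $\mathbb{F}_p$. The set of \emph{bad} values $a \in [1,p)$ is contained in the union of at most $|\L\setminus\{\mathbf 0\}| \le |\L|$ such root sets, so by the hypothesis (\ref{i3.1}) its cardinality is bounded by
$$(d-1)|\L| < (d-1)\cdot\frac{p-1}{d} < p-1.$$
Since $[1,p)$ contains exactly $p-1$ integers, at least one of them escapes all these root sets, and that integer is the required $a$.

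No step is genuinely difficult here; the argument is essentially a Schwartz--Zippel count in one variable. The only delicate point I would flag is the nondegeneracy assumption above: a nonzero $\mathbf m \in \L$ with $\mathbf m \equiv \mathbf 0 \pmod p$ would make $P_{\mathbf m}$ the zero polynomial in $\mathbb{F}_p[x]$ and (\ref{i3.2}) unsatisfiable for every $a$. I would either state this explicitly as part of the hypotheses or (equivalently) interpret $\L$ from the outset as a subset of $\mathbb{F}_p^d\setminus\{\mathbf 0\}$.
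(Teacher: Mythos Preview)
Your proof is correct and essentially identical to the paper's: both arguments count, for each nonzero $\mathbf m\in\L$, the at most $d-1$ roots in $\mathbb{F}_p$ of the polynomial $m_1+m_2x+\cdots+m_dx^{d-1}$, take the union, and compare its size to $p-1$. Your explicit flagging of the nondegeneracy condition $\mathbf m\not\equiv\mathbf 0\pmod p$ matches the tacit assumption $|m_j|<p$ in the paper's proof.
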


\begin{proof} Let $a\in I_p$ be a natural number. We consider the congruence
\be\label{i3.3}
m_1 + am_2 +\dots+a^{d-1} m_d \equiv 0 \qquad\pmod{p} .
\ee
For the fixed vector $\mathbf m =(m_1,\dots,m_d )$ we denote by
$A_p (\mathbf m)$ the set of natural numbers $a\in I_p$ which are
solutions of the congruence (\ref{i3.3}). It is well-known that for
$\mathbf m\ne\mathbf 0$, $|m_j | < p$, $j = 1,\dots,d$ the number
$\bigl|A_p (\mathbf m)\bigr|$ of the elements of the set $A_p (\mathbf m)$
satisfies the inequality
\be\label{i3.4}
\bigl|A_p (\mathbf m)\bigr| \le d - 1 < d.
\ee
We denote by $G$ the set of the numbers $a$ for which there is a nontrivial 
solution $\mathbf m\in\L$ of the congruence (\ref{i3.3}), that is
$$
G=\cup_{\mathbf m\in\L,\bm \neq \mathbf 0}A_p (\mathbf m) .
$$
Let us estimate the number $|G|$ of elements of the set $G$. By
(\ref{i3.4}) and (\ref{i3.1}) we have
\be\label{i3.5}
|G|\le\sum_{\mathbf m\in\L,\bm \neq \mathbf 0}\bigl|A_p (\mathbf m)\bigr| <
d\bigl|\L\bigr| < p-1.
\ee
This means that there exists a number $a\in I_p$ which does not belong to
the set $G$. This $a$ is the required number by the
definition of the set $G$. The lemma is proved.
\end{proof}

We will need the following simple lemma. 

\begin{Lemma}\label{L4.2} Let $p$, $\L$, and $a$ be from Lemma \ref{iL3.1}. Consider the set $\{\xi^\nu\}_{\nu=1}^p$ of points $\xi^\nu = (\xi^\nu_1,\dots,\xi^\nu_d)$, $\xi^\nu_j:=\{\nu a^{j-1}/p\}2\pi$. Here $\{x\}$ denotes the fractional part of $x$. Then for any polynomial $t\in \Tr(\L)$ 
$$
\frac{1}{p} \sum_{\nu=1}^p t(\xi^\nu) = (2\pi)^{-d} \int_{\T^d} t(\bx)d\bx.
$$
\end{Lemma}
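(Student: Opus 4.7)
The plan is to expand $t\in\Tr(\L)$ in its Fourier series and reduce the identity to the standard orthogonality of characters on $\Z/p\Z$, with Lemma \ref{iL3.1} ensuring non-degeneracy of those characters.

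First, write $t(\bx)=\sum_{\bm\in\L} c_{\bm}e^{i(\bm,\bx)}$, so that $(2\pi)^{-d}\int_{\T^d}t(\bx)d\bx=c_{\mathbf 0}$. By linearity it suffices to verify, for every fixed $\bm\in\L$, the single-exponential identity
$$
\frac{1}{p}\sum_{\nu=1}^p e^{i(\bm,\xi^\nu)} = \begin{cases}1, & \bm=\mathbf 0,\\ 0, & \bm\ne\mathbf 0.\end{cases}
$$
Next, I would compute the exponent. Since $e^{2\pi i\{x\}}=e^{2\pi ix}$ for all real $x$, the definition $\xi^\nu_j=\{\nu a^{j-1}/p\}2\pi$ yields
$$
e^{i(\bm,\xi^\nu)} = \prod_{j=1}^d e^{2\pi i\, m_j\{\nu a^{j-1}/p\}} = \exp\!\left(\tfrac{2\pi i \nu}{p}(m_1+am_2+\cdots+a^{d-1}m_d)\right).
$$
Setting $S(\bm):=m_1+am_2+\cdots+a^{d-1}m_d$ and $\omega:=\exp(2\pi i S(\bm)/p)$, the sum over $\nu$ is the geometric sum $\sum_{\nu=1}^p \omega^\nu$.

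Finally, Lemma \ref{iL3.1} guarantees that for every nonzero $\bm\in\L$, $S(\bm)\not\equiv 0\pmod p$, hence $\omega$ is a nontrivial $p$-th root of unity and $\sum_{\nu=1}^p\omega^\nu=0$. When $\bm=\mathbf 0$ we have $\omega=1$ and the sum equals $p$. Dividing by $p$, multiplying by $c_{\bm}$, and summing over $\bm\in\L$ leaves only the $\bm=\mathbf 0$ term, which is exactly $c_{\mathbf 0}$, completing the proof. There is no serious obstacle here; the whole point is that Lemma \ref{iL3.1} has been set up precisely so the non-trivial characters appearing in this calculation are automatically non-principal.
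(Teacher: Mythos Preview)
Your proof is correct and follows essentially the same route as the paper's: reduce by linearity to a single exponential $e^{i(\bm,\bx)}$ with $\bm\in\L$, compute $e^{i(\bm,\xi^\nu)}=\exp\bigl(2\pi i\nu(m_1+am_2+\cdots+a^{d-1}m_d)/p\bigr)$, and then invoke Lemma~\ref{iL3.1} together with the geometric-sum identity for roots of unity. Your version is simply a bit more explicit in justifying the removal of the fractional part and in writing out the geometric sum.
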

\begin{proof} It is sufficient to prove this lemma for $t(\bx) =e^{i(\bm,\bx)}$, $\bm\in \L$. We have
$$
\frac{1}{p} \sum_{\nu=1}^p e^{i(\bm,\xi^\nu)}= \frac{1}{p} \sum_{\nu=1}^p e^{i2\pi(m_1+m_2a+\dots+m_da^{d-1})\nu/p}.
$$
By the choice of $a$ in Lemma \ref{iL3.1} we obtain that the above sum is equal to $0$ if $\bm\in\Lambda$, $\bm \neq \mathbf 0$ and, obviously, is equal to $1$ if  $\bm = \mathbf 0$. 

\end{proof}

Let $Q$ be a finite subset of $Z^d$. We associate with it the following set
$$
\L :=\L(Q):= \{\bm-\bk: \bm\in Q,\bk\in Q\}.
$$
Then $|\L| \le |Q|^2$.
For a given $Q\subset \Z^d$ let $p$ be a smallest prime satisfying  $|\L(Q)| <(p-1)/d$ and let $a\in I_p$ be a number from Lemma \ref{iL3.1}. Define the set $\{\xi^\nu\}_{\nu=1}^p$ of points $\xi^\nu = (\xi^\nu_1,\dots,\xi^\nu_d)$, $\xi^\nu_j:=\{\nu a^{j-1}/p\}2\pi$. Here $\{x\}$ denotes the fractional part of $x$. Consider the following operator $T:=T^p_Q$, which maps a vector $b=(b_1,\dots,b_p)$ to a polynomial $T(b) \in \Tr(Q)$, by the rule
$$
T(b)(\bx) = \frac{1}{p} \sum_{\nu=1}^p b_\nu \D_Q(\bx-\xi^\nu),
$$
where
$$
\D_Q(\bx):= \sum_{\bk\in Q}e^{i(\bk,\bx)}.
$$

\begin{Proposition}\label{P4.1} The operator $T$ has the following two properties.

{\bf A.} For any $t\in \Tr(Q)$ we have
$$
T(t,\bx):= \frac{1}{p} \sum_{\nu=1}^p t(\xi^\nu) \D_Q(\bx-\xi^\nu) = t(\bx).
$$

{\bf B.} The following inequality holds
$$
\|T(b)\|_2 \le \left(\frac{1}{p}\sum_{\nu=1}^p |b_\nu|^2\right)^{1/2} = \|bp^{-1/2}\|_{\ell_2}.
$$
\end{Proposition}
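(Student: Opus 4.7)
The plan is to rely on one orthogonality identity and then do routine Fourier bookkeeping. The key identity, an immediate consequence of Lemma~\ref{L4.2} applied to $t(\bx)=e^{i(\bk-\bm,\bx)}$, is
\[
\frac{1}{p}\sum_{\nu=1}^{p} e^{i(\bk-\bm,\xi^\nu)}=\delta_{\bk,\bm}\qquad\text{for all }\bk,\bm\in Q,
\]
which is valid because $\bk-\bm\in\L(Q)$ and Lemma~\ref{iL3.1} was invoked with exactly this $\L$. Property~A and Property~B will both reduce to this fact.

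For Property~A, I would write $t\in\Tr(Q)$ as $t(\bx)=\sum_{\bk\in Q}c_\bk e^{i(\bk,\bx)}$ and expand
\[
\frac{1}{p}\sum_{\nu=1}^{p} t(\xi^\nu)\D_Q(\bx-\xi^\nu)=\sum_{\bk,\bm\in Q} c_\bk e^{i(\bm,\bx)}\cdot\frac{1}{p}\sum_{\nu=1}^{p} e^{i(\bk-\bm,\xi^\nu)}.
\]
The inner sum is $\delta_{\bk,\bm}$ by the identity above, so the double sum collapses to $\sum_\bk c_\bk e^{i(\bk,\bx)}=t(\bx)$.

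For Property~B, I would compute the Fourier coefficients of $T(b)$ directly:
\[
T(b)(\bx)=\sum_{\bk\in Q} \hat c_\bk\, e^{i(\bk,\bx)},\qquad \hat c_\bk=\frac{1}{p}\sum_{\nu=1}^{p} b_\nu e^{-i(\bk,\xi^\nu)},
\]
so by Parseval $\|T(b)\|_2^2=\sum_{\bk\in Q}|\hat c_\bk|^2$. Expanding the square,
\[
\sum_{\bk\in Q}|\hat c_\bk|^2=\frac{1}{p^2}\sum_{\nu,\mu=1}^{p} b_\nu \bar b_\mu \sum_{\bk\in Q} e^{i(\bk,\xi^\mu-\xi^\nu)}=\frac{1}{p^2}\sum_{\nu,\mu}b_\nu\bar b_\mu\,\D_Q(\xi^\mu-\xi^\nu).
\]
Writing this as $\frac{1}{p^2}\langle V V^* b,b\rangle$ with $V_{\nu,\bk}=e^{-i(\bk,\xi^\nu)}$, I would use the identity above again, this time to compute $V^*V$:
\[
(V^*V)_{\bk,\bm}=\sum_{\nu=1}^{p} e^{i(\bk-\bm,\xi^\nu)}=p\,\delta_{\bk,\bm},
\]
so $V^*V=pI_{|Q|}$, which gives $\|V\|_{\mathrm{op}}^2=\|VV^*\|_{\mathrm{op}}=p$. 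Therefore $\langle VV^*b,b\rangle\le p\|b\|_{\ell_2}^2$ and
\[
\|T(b)\|_2^2\le \frac{1}{p}\sum_{\nu=1}^{p}|b_\nu|^2,
\]
which is Property~B after taking square roots.

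Neither step is really an obstacle; the whole content is the orthogonality identity derived from Lemma~\ref{L4.2}, and the only thing to watch is that in the expansions one only ever encounters differences $\bk-\bm$ with $\bk,\bm\in Q$, which by the definition $\L(Q)=Q-Q$ belong to the set where Lemma~\ref{iL3.1} was arranged to work.
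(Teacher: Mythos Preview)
Your proof is correct. Property~\textbf{A} is argued exactly as in the paper (expand in exponentials and invoke Lemma~\ref{L4.2} on differences $\bk-\bm\in\L(Q)$). For Property~\textbf{B} you and the paper start from the same quadratic form $\frac{1}{p^2}\sum_{\nu,\mu}b_\nu\bar b_\mu\,\D_Q(\xi^\mu-\xi^\nu)$, but finish slightly differently: the paper shows the $p\times p$ matrix $\D_{\nu',\nu}=\frac{1}{p}\D_Q(\xi^{\nu'}-\xi^\nu)$ is idempotent, $\D^2=\D$, by applying Lemma~\ref{L4.2} to the product $\D_Q(\xi^u-\bx)\D_Q(\bx-\xi^v)\in\Tr(\L)$, hence all eigenvalues lie in $\{0,1\}$; you instead compute $V^*V=pI_{|Q|}$ directly from the same orthogonality identity and read off $\|VV^*\|_{\mathrm{op}}=p$. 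These are equivalent observations (indeed $V^*V=pI$ immediately gives $(\tfrac1p VV^*)^2=\tfrac1p VV^*$), so the content is the same; your packaging is marginally more direct, while the paper's idempotency statement records the extra structural fact that $\D$ is an orthogonal projection.
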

\begin{proof} We first prove property {\bf A}. Clearly, it is sufficient to prove it for 
$t(\bx) = e^{i(\bm,\bx)}$, $\bm\in Q$. We have
$$
\frac{1}{p} \sum_{\nu=1}^p e^{i(\bm,\xi^\nu)} \D_Q(\bx-\xi^\nu) = \frac{1}{p} \sum_{\nu=1}^p e^{i(\bm,\xi^\nu)} \sum_{\bk\in Q}e^{i(\bk,\bx)-i(\bk,\xi^\nu)}
$$
\be\label{4.6}
=  \sum_{\bk\in Q}e^{i(\bk,\bx)}\frac{1}{p} \sum_{\nu=1}^p e^{i(\bm-\bk,\xi^\nu)}.
\ee
The fact $\bm\in Q$, $\bk\in Q$ implies that $\bm-\bk\in\L$. Applying Lemma \ref{L4.2}, we complete the proof of property {\bf A}. 

Second, we prove property {\bf B}. We have
$$
\|T(b)\|_2^2 = \frac{1}{p^2}\frac{1}{(2\pi)^d}\int_{\T^d} \left|\sum_{\nu=1}^p b_\nu \D_Q(\bx-\xi^\nu)\right|^2 d\bx
$$
$$
 = \frac{1}{p^2}\frac{1}{(2\pi)^d}\int_{\T^d} \sum_{1\le\nu,\nu'\le p} b_\nu\D_Q(\bx-\xi^\nu){\overline \D_Q(\bx-\xi^{\nu'})} {\bar b}_{\nu'} d\bx
 $$
 \be\label{4.7}
 =  \frac{1}{p^2}\sum_{1\le\nu,\nu'\le p} b_\nu\D_Q(\xi^{\nu'}-\xi^\nu){\bar b}_{\nu'}.
 \ee
In the right hand side of (\ref{4.7}) we have a quadratic form of the vector $bp^{-1/2}$ with the matrix $\D$ with elements
$$
\D_{\nu',\nu} = \frac{1}{p}\D_Q(\xi^{\nu'}-\xi^\nu).
$$
Let $\lambda (\D)$ be the largest in absolute value eigenvalue of the matrix $\D$. Then we can continue (\ref{4.7}) by
\be\label{4.8}
\le |\lambda(\D)| \|bp^{-1/2}\|_{\ell_2}^2.
\ee
Let us prove that 
\be\label{4.9}
|\lambda(\D)| \le 1.
\ee
Consider the matrix $\D^2$. Its $(u,v)$ element is given by 
\be\label{4.10} 
(\D^2)_{u,v} = \frac{1}{p^2} \sum_{n=1}^p \D_Q(\xi^{u}-\xi^n)\D_Q(\xi^n-\xi^v).
\ee
Function $\D_Q(\xi^{u}-\bx)\D_Q(\bx-\xi^v)$ belongs to $\Tr(\L)$ and, therefore, by Lemma \ref{L4.2} we get
$$
(\D^2)_{u,v} =\frac{1}{p} \frac{1}{(2\pi)^d} \int_{\T^d} \D_Q(\xi^{u}-\bx)\D_Q(\bx-\xi^v)d\bx
$$
$$
= \frac{1}{p} \D_Q(\xi^{u}-\xi^v) = \D_{u,v}.
$$
Thus, $\D^2 = \D$ and, therefore, (\ref{4.9}) holds. Combining relations 
(\ref{4.7}) -- (\ref{4.9}) we complete the proof of property {\bf B} and Proposition \ref{P4.1}.
\end{proof}

\begin{Theorem}\label{T4.1} Let $Q$ be a finite subset of $\Z^d$ and let $\L(Q)$, $p$, $a$, and the set $\{\xi^\nu\}_{\nu=1}^p$ be associated with $Q$ as above. Then for any $t\in \Tr(Q)$ we have
$$
\|t\|_2^2 = \frac{1}{p}\sum_{\nu=1}^p |t(\xi^\nu)|^2.
$$
\end{Theorem}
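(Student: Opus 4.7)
The plan is to reduce everything to Lemma \ref{L4.2}, which already provides exact cubature for the larger trigonometric class $\Tr(\L(Q))$. The key observation is that if $t\in\Tr(Q)$, then $|t|^2$ is itself a trigonometric polynomial with frequencies contained in $\L(Q)=Q-Q$, which is precisely the set used to calibrate the prime $p$ and the generator $a$ via Lemma \ref{iL3.1}.

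First I would expand $t(\bx)=\sum_{\bk\in Q}c_\bk e^{i(\bk,\bx)}$ and write
$$
|t(\bx)|^2 = t(\bx)\overline{t(\bx)} = \sum_{\bk,\bk'\in Q} c_\bk \bar c_{\bk'}\, e^{i(\bk-\bk',\bx)}.
$$
Since every difference $\bk-\bk'$ belongs to $\L(Q)$ by definition, we have $|t|^2\in\Tr(\L(Q))$. Because $p$ and $a$ were chosen according to Lemma \ref{iL3.1} relative to $\L(Q)$, Lemma \ref{L4.2} applies to every polynomial in $\Tr(\L(Q))$; applying it to $|t|^2$ yields
$$
\frac{1}{p}\sum_{\nu=1}^p |t(\xi^\nu)|^2 = \frac{1}{(2\pi)^d}\int_{\T^d}|t(\bx)|^2\, d\bx = \|t\|_2^2,
$$
which is exactly the statement of the theorem.

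There is essentially no obstacle at this stage: all the hard work was done in Lemmas \ref{iL3.1}--\ref{L4.2}, where the number-theoretic construction was engineered precisely to guarantee exact integration on $\Tr(\L)$, and $\L(Q)$ was defined precisely so that the squared modulus of every $t\in\Tr(Q)$ lands in this class. As a side remark, one could alternatively invoke Proposition \ref{P4.1}: Property \textbf{A} with $b_\nu = t(\xi^\nu)$ gives $t=T(b)$, and then Property \textbf{B} yields the upper bound $\|t\|_2^2\le \frac{1}{p}\sum_\nu |t(\xi^\nu)|^2$. This route produces only an inequality and is strictly weaker than the direct argument above, so I would not use Proposition \ref{P4.1} for Theorem \ref{T4.1} itself — its role is conceptual (exhibiting the discretization operator $T$) rather than necessary for the equality.
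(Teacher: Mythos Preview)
Your proof is correct and in fact more direct than the paper's. The paper establishes the equality as two separate inequalities, both routed through Proposition~\ref{P4.1}: Property~\textbf{A} combined with Property~\textbf{B} yields $\|t\|_2^2 \le \frac{1}{p}\sum_{\nu} |t(\xi^\nu)|^2$, and the reverse inequality is obtained by rewriting the discrete sum as an inner product of $t$ against a function of the form $T(\bar b)$, applying Cauchy--Schwarz, and invoking Property~\textbf{B} once more. Your route bypasses the operator $T$ entirely: the single observation that $|t|^2 \in \Tr(\L(Q))$ reduces the statement to one application of Lemma~\ref{L4.2}, and the exact equality drops out in one line. This is strictly shorter and more transparent; it also makes explicit why $\L(Q)=Q-Q$ is the right set to feed into Lemma~\ref{iL3.1}. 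A minor correction to your side remark: the paper \emph{does} extract both inequalities from Proposition~\ref{P4.1} (Property~\textbf{B} is used twice), so $T$ is load-bearing in their argument rather than purely conceptual --- but your broader point that Proposition~\ref{P4.1} is unnecessary for Theorem~\ref{T4.1} itself is well taken.
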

\begin{proof}  By property {\bf A} of Proposition \ref{P4.1} we get
\be\label{4.11}
t(\bx)= \frac{1}{p} \sum_{\nu=1}^p t(\xi^\nu) \D_Q(\bx-\xi^\nu).
\ee
By property {\bf B} of Proposition \ref{P4.1} we obtain from here
\be\label{4.12}
\|t\|_2^2 \le \frac{1}{p}\sum_{\nu=1}^p |t(\xi^\nu)|^2.
\ee

We now prove the inequality opposite to (\ref{4.12}). 
We have 
$$
\frac{1}{p}\sum_{\nu=1}^p |t(\xi^\nu)|^2= \frac{1}{p}\sum_{\nu=1}^p t(\xi^\nu){\bar t}(\xi^\nu) = \frac{1}{p}\sum_{\nu=1}^p {\bar t}(\xi^\nu) \frac{1}{(2\pi)^d}\int_{\T^d} t(\bx)\D_Q(\xi^\nu-\bx)d\bx
$$
$$
= \frac{1}{(2\pi)^d}\int_{\T^d} t(\bx) \frac{1}{p}\sum_{\nu=1}^p {\bar t}(\xi^\nu)\D_Q(\xi^\nu-\bx)d\bx \le \|t\|_2 \left\|\frac{1}{p}\sum_{\nu=1}^p {\bar t}(\xi^\nu)\D_Q(\xi^\nu-\bx)\right\|_2 .
$$
By property {\bf B} of Proposition \ref{P4.1} we continue
$$
\le \|t\|_2 \left(\frac{1}{p}\sum_{\nu=1}^p |t(\xi^\nu)|^2\right)^{1/2}.
$$
This implies
\be\label{4.13}
\left(\frac{1}{p}\sum_{\nu=1}^p |t(\xi^\nu)|^2\right)^{1/2} \le \|t\|_2.
\ee

\end{proof}

{\bf Proof of Theorem \ref{NOUth}.} We will prove a somewhat more general statement here. We begin with the following lemma from \cite{NOU} (see Lemma 2 there). 

\begin{Lemma}\label{NOUL2} Let a system of vectors $\bv_1,\dots,\bv_M$ from $\bbC^N$ have the following properties: for all $\bw\in \bbC^N$
\be\label{4.14}
\sum_{j=1}^M |\<\bw,\bv_j\>|^2 = \|\bw\|_2^2
\ee
and
\be\label{4.15}
\|\bv_j\|_2^2 = N/M,\qquad j=1,\dots,M.
\ee
Then there is a subset $J\subset \{1,2,\dots,M\}$ such that for all $\bw\in\bbC^N$
\be\label{4.16}
c_0 \|\bw\|_2^2 \le \frac{M}{N} \sum_{j\in J} |\<\bw,\bv_j\>|^2 \le C_0\|\bw\|_2^2,
\ee
where $c_0$ and $C_0$ are some absolute positive constants. 
\end{Lemma}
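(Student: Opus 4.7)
The plan is to derive Lemma \ref{NOUL2} from the Marcus--Spielman--Srivastava resolution of the Weaver $KS_2$ (Kadison--Singer) conjecture \cite{MSS} by iterating their subset-selection step. The key consequence of \cite{MSS} we will use is: for any Parseval frame $\{u_j\}_{j=1}^m\subset \bbC^N$ (i.e.\ $\sum_j u_j u_j^{*}=I$) with $\|u_j\|^2\le \e$ for every $j$, there is a partition $\{1,\dots,m\}=S_1\sqcup S_2$ such that
\[
\Bigl\|\sum_{j\in S_k}u_j u_j^{*}\Bigr\|\le \tfrac{1}{2} + O(\sqrt{\e}),\qquad k=1,2.
\]
Because $\sum_{S_1}u_j u_j^{*}+\sum_{S_2}u_j u_j^{*}=I$, an upper bound on one block is automatically a lower bound on the other, so each block in fact satisfies
\[
\bigl(\tfrac{1}{2}-O(\sqrt{\e})\bigr) I \preceq \sum_{j\in S_k} u_j u_j^{*} \preceq \bigl(\tfrac{1}{2}+O(\sqrt{\e})\bigr) I.
\]

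Apply this to $\{\bv_j\}_{j=1}^M$. Hypotheses \eqref{4.14} and \eqref{4.15} say precisely that this is a Parseval frame with $\|\bv_j\|^2=N/M=:\e_0$. One application of the above splits the index set into blocks $S_1,S_2$ whose frame operators $A_k^{(1)}:=\sum_{j\in S_k}\bv_j\bv_j^{*}$ lie in the displayed sandwich with $\e=\e_0$. To iterate inside $S_k$ I would whiten: replace each $\bv_j$ ($j\in S_k$) by $\tilde{\bv}_j:=(A_k^{(1)})^{-1/2}\bv_j$. These form a Parseval frame in $\bbC^N$ with individual squared norms at most $\|(A_k^{(1)})^{-1}\|\,\|\bv_j\|^2\le (\tfrac{1}{2}-O(\sqrt{\e_0}))^{-1}\e_0\approx 2\e_0$, so the new small-norm parameter is $\e_1\approx 2\e_0$. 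Repeating for $L\asymp \log_2(M/N)$ levels produces a nested partition of $\{1,\dots,M\}$ into $\asymp M/N$ leaves of cardinality $\asymp N$ each. Unwinding the whitenings, the frame operator of any leaf $J$ in the original scale equals a telescoping product of the per-level sandwich factors, so
\[
c_0\,\tfrac{N}{M}\, I \preceq \sum_{j\in J}\bv_j\bv_j^{*} \preceq C_0\,\tfrac{N}{M}\, I,
\]
which is precisely \eqref{4.16} once both sides are multiplied by $M/N$ and written as a quadratic form in $\bw$.

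The main obstacle is to show that the constants $c_0,C_0$ survive the iteration as absolute constants. At level $\ell$ the per-step slack is $O(\sqrt{\e_\ell})$ with $\e_\ell\asymp 2^\ell\e_0=2^\ell N/M$, so the total accumulated slack in the logarithm of the product is
\[
\sum_{\ell=0}^{L}\sqrt{\e_\ell}\;\lesssim\; \sqrt{\e_0}\sum_{\ell=0}^L 2^{\ell/2}\;\asymp\; \sqrt{2^L\e_0}\;\lesssim\; 1,
\]
provided the recursion is stopped once $2^\ell N/M$ reaches a small absolute threshold. This keeps the telescoped upper- and lower-sandwich factors within absolute multiplicative constants of $2^{-L}\asymp N/M$, and simultaneously keeps the whitening step well-conditioned because the lower factor $\tfrac{1}{2}-O(\sqrt{\e_\ell})$ stays uniformly positive throughout. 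Packaging these estimates into a clean induction on the level $\ell$ yields the two absolute constants $c_0,C_0>0$ demanded by the lemma.
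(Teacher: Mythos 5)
The paper does not actually prove Lemma \ref{NOUL2}: it is imported verbatim from \cite{NOU} (Lemma 2 there), so there is no internal argument to compare yours against. Your reconstruction is essentially the proof from the cited source: one application of the Marcus--Spielman--Srivastava resolution of Weaver's $KS_2$ gives $\bigl\|\sum_{j\in S_k}\bv_j\bv_j^{*}\bigr\|\le \tfrac12(1+\sqrt{2\e})^{2}$ for both halves of a partition, the matching lower bound is free because the two blocks sum to the identity, and iterating after whitening roughly doubles the small-norm parameter at each level, so the accumulated multiplicative error $\prod_{\ell}\bigl(1\pm O(\sqrt{\e_\ell})\bigr)$ is controlled by the geometric sum $\sum_{\ell}\sqrt{\e_\ell}\asymp\sqrt{2^{L}\e_0}$, which stays bounded once you stop the recursion when $2^{\ell}N/M$ reaches a small absolute threshold. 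The argument is sound. Two points worth making explicit in a polished write-up: (i) the degenerate case $M\le CN$, where no splitting is needed and $J=\{1,\dots,M\}$ gives (\ref{4.16}) with $c_0=1$, $C_0=M/N\le C$; and (ii) after whitening the vectors no longer have equal norms, only a uniform upper bound on $\|\tilde{\bv}_j\|^{2}$ --- which is all the MSS corollary requires, so nothing breaks. Your side remark that the leaves have cardinality $\asymp N$ is true up to constants but is not needed for the lemma itself; the paper derives the cardinality bound separately in Remark \ref{R4.1} directly from (\ref{4.16}), (\ref{4.14}), and (\ref{4.15}).
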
 

\begin{Remark}\label{R4.1} For the cardinality of the subset $J$ from Lemma \ref{NOUL2} we have
$$
c_0 N \le |J| \le C_0 N.
$$
\end{Remark}
\begin{proof} We write the inequalities (\ref{4.16}) for $\bv_k$, $k=1,\dots,M$
\be\label{4.17}
c_0 \|\bv_k\|_2^2 \le \frac{M}{N} \sum_{j\in J} |\<\bv_k,\bv_j\>|^2 \le C_0\|\bv_k\|_2^2,
\ee
Summing up the inequalities (\ref{4.17}) over $k$ from $1$ to $M$ and taking into account (\ref{4.15}) and (\ref{4.14}) we obtain
\be\label{4.18} 
c_0 N \le \frac{M}{N}\sum_{j\in J} \|\bv_j\|_2^2 \le C_0 N.
\ee
Using (\ref{4.15}) again, we complete the proof.
\end{proof}
We use Lemma \ref{NOUL2} to prove the following result.
\begin{Theorem}\label{T4.2} Let  $\Omega_M=\{x^j\}_{j=1}^M$ be a discrete set with the probability measure $\mu(x^j)=1/M$, $j=1,\dots,M$. Assume that 
$\{u_i(x)\}_{i=1}^N$ is an orthonormal on $\Omega_M$ system (real or complex). Assume in addition that this system has the following property: for all $j=1,\dots, M$ we have
\be\label{4.19}
\sum_{i=1}^N |u_i(x^j)|^2 = N.
\ee
Then there is an absolute constant $C_1$ such that there exists a subset $J\subset \{1,2,\dots,M\}$ with the property:  $m:=|J| \le C_1 N$ and
 for any $f\in X_N:= \sp\{u_1,\dots,u_N\}$ we have  
$$
C_2 \|f\|_2^2 \le \frac{1}{m}\sum_{j\in J} |f(x^j)|^2 \le C_3 \|f\|_2^2, 
$$
where $C_2$ and $C_3$ are absolute positive constants. 
\end{Theorem}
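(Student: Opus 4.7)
The plan is to apply Lemma \ref{NOUL2} directly to a carefully chosen system of $M$ vectors in $\bbC^N$ built from the values of the orthonormal functions $u_i$ at the sample points $x^j$.

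\medskip

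The natural choice is to set $(\bv_j)_i := \overline{u_i(x^j)}/\sqrt{M}$ for $i=1,\dots,N$ and $j=1,\dots,M$. Then for $\bw=(w_1,\dots,w_N)\in \bbC^N$, writing $f := \sum_{i=1}^N w_i u_i \in X_N$, one has $\<\bw,\bv_j\> = f(x^j)/\sqrt{M}$. The identity
$$
\sum_{j=1}^M |\<\bw,\bv_j\>|^2 = \frac{1}{M}\sum_{j=1}^M |f(x^j)|^2 = \|f\|_2^2 = \|\bw\|_2^2
$$
holds by the orthonormality of $\{u_i\}$ on $\Omega_M$ (and Parseval), which verifies (\ref{4.14}). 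Condition (\ref{4.15}) follows from the extra hypothesis (\ref{4.19}): $\|\bv_j\|_2^2 = \frac{1}{M}\sum_{i=1}^N |u_i(x^j)|^2 = N/M$.

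\medskip

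With both hypotheses verified, Lemma \ref{NOUL2} yields a subset $J\subset\{1,\dots,M\}$ such that
$$
c_0\|\bw\|_2^2 \le \frac{M}{N}\sum_{j\in J}|\<\bw,\bv_j\>|^2 \le C_0\|\bw\|_2^2
$$
for every $\bw\in \bbC^N$. Translating back via $\<\bw,\bv_j\>=f(x^j)/\sqrt{M}$ gives
$$
c_0\|f\|_2^2 \le \frac{1}{N}\sum_{j\in J}|f(x^j)|^2 \le C_0\|f\|_2^2
$$
for every $f\in X_N$. Remark \ref{R4.1} further guarantees $c_0 N \le m := |J| \le C_0 N$, so $m\le C_1 N$ with $C_1 := C_0$.

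\medskip

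To pass to the normalization $\tfrac{1}{m}$ required in the statement, I would simply write $\tfrac{1}{m}\sum_{j\in J}|f(x^j)|^2 = \tfrac{N}{m}\cdot \tfrac{1}{N}\sum_{j\in J}|f(x^j)|^2$ and use $1/C_0 \le N/m \le 1/c_0$ to conclude with $C_2 := c_0/C_0$ and $C_3 := C_0/c_0$. There is no real obstacle here: the whole argument is a direct repackaging of Lemma \ref{NOUL2}, and the only thing one has to notice is that the extra flatness hypothesis (\ref{4.19}) is exactly what is needed to secure (\ref{4.15}) for these vectors; without it, $\|\bv_j\|_2^2$ would vary with $j$ and the lemma would not apply as stated.
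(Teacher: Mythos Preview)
Your proof is correct and follows essentially the same approach as the paper: define $\bv_j$ from the sampled values of the $u_i$, verify (\ref{4.14}) via orthonormality and (\ref{4.15}) via the flatness hypothesis (\ref{4.19}), then invoke Lemma \ref{NOUL2} and Remark \ref{R4.1}. Your version is in fact slightly more explicit than the paper's in spelling out the final passage from the $1/N$ normalization to the $1/m$ normalization.
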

\begin{proof} Define the column vectors
\be\label{4.20} 
\bv_j := M^{-1/2}(u_1(x^j),\dots,u_N(x^j))^T,\qquad j=1,\dots, M.
\ee
Then our assumption (\ref{4.19}) implies that the system $\bv_1,\dots,\bv_M$ satisfies (\ref{4.15}). For any $\bw=(w_1,\dots,w_N)^T\in \bbC^N$ we have 
$$
\sum_{j=1}^M |\<\bw,\bv_j\>|^2 = \frac{1}{M}\sum_{j=1}^M \sum_{i,k=1}^N w_i {\bar w}_k u_i(x^j){\bar u}_k(x^j) = \sum_{i=1}^N |w_i|^2
$$
by the orthonormality assumption. This implies that the system $\bv_1,\dots,\bv_M$ satisfies (\ref{4.14}). 

Applying Lemma \ref{NOUL2} and Remark \ref{R4.1} we complete the proof of 
Theorem \ref{T4.2}.

\end{proof}

We now complete the proof of Theorem \ref{NOUth}. Let $Q\subset \Z^d$ be a finite subset. Then there is an $\bN$ such that $Q\subset \Pi(\bN)$. Define $M:= \vartheta(\bN)$ and
$\Omega_M := \{\bx^\mathbf n\}_{\mathbf n\in P(\bN)}$ (see the Introduction). Consider the system $u_\bk(\bx) := e^{i(\bk,\bx)}$, $\bk\in Q$, defined on $\Omega_M$. 
It is well known that this system is orthonormal on $\Omega_M$. The property (\ref{4.19}) is obvious. 
Applying Theorem \ref{T4.2} and taking into account (\ref{b1.5}) we complete 
the proof of Theorem \ref{NOUth}.
 
\section{The entropy numbers of $\Tr(Q_n)_1$}
\label{entropy}

Proof of Theorem \ref{T2.1} is based on the known result on the behavior of the entropy numbers \newline $\e_k(\Tr(Q_n)_1,L_\infty)$, which is proved in \cite{VT156} and formulated in Theorem \ref{T2.2}. The proof of Theorem \ref{T2.2} from \cite{VT156} is rather technically involved -- it is based on the Riesz product technique for the hyperbolic cross polynomials. There is no analog of bivariate Riesz product technique in the case $d\ge 3$. In this section we present a technique, which works for all $d$. This technique gives a weaker result in case $d=2$ than in Theorem \ref{T2.2}. Instead of the extra factor $n^{1/2}$ in Theorem \ref{T2.2} this simpler technique gives an extra factor 
 $n$. It is a simplified version of the technique developed in \cite{VT156}. 

Following \cite{VT156}, we present here a construction of an orthonormal basis, which is based on the wavelet theory. This construction is taken from \cite{OO}. Let $\de$ be a fixed number, $0<\de\le1/3$, and let $\hat \ff(\la) = \hat \ff_\de(\la)$, $\la\in \R$, be a sufficiently smooth function (for simplicity, real-valued and even) equal $1$ for $|\la| \le (1-\de)/2$, equal to $0$ for $|\la|>(1+\de)/2$ and such that the integral translates of its square constitute a partition of unity:
\be\label{O1}
\sum_{k\in\Z}(\hat\ff(\la+k))^2 =1,\qquad \la\in\R.
\ee
It is known that condition (\ref{O1}) is equivalent to the following property: The set of 
functions $\Phi:=\{\ff(\cdot+l)\}_{l\in\Z}$, where
$$
\ff(x) = \int_\R\hat\ff(\la)e^{2\pi i\la x}d\la,
$$
is an orthonormal system on $\R$:
\be\label{O1'}
\int_\R\ff(x+k)\ff(x+l)dx =\de_{k,l},\qquad k,l\in\Z.
\ee
Following \cite{OO} define
$$
\theta(\la) := \left(((\hat\ff(\la/2))^2-(\hat\ff(\la))^2\right)^{1/2}
$$
and consider, for $n=0,1,\dots$, the trigonometric polynomials
\be\label{O3}
\Psi_n(x):=2^{-n/2}\sum_{k\in\Z}\theta(k2^{-n})e^{2\pi ikx}.
\ee
Introduce also the following dyadic translates of $\Psi_n$:
$$
\Psi_{n,j}(x):= \Psi_n(x-(j+1/2)2^{-n}),
$$
and define the sequence of polynomials $\{T_k\}_{k=0}^\infty$
\be\label{O4}
T_0(x):=1,\qquad T_k(x):= \Psi_{n,j}(x)  
\ee
if $k=2^n+j$, $n=0,1,\dots$, $0\le j<2^n$. Note that $T_k$ is the trigonometric polynomial such that
\be\label{O4'}
\hat T_k(\nu) =0\quad\text{if} \quad |\nu|\ge 2^n(1+\de)\quad \text{or}\quad 
|\nu|\le 2^{n-1}(1-\de).
\ee

It is proved in \cite{OO} that the system $\{T_k\}_{k=0}^\infty$ is a complete orthonormal basis in all $L_p$, $1\le p\le \infty$ (here, $L_\infty$ stands for the space of continuous functions) of $1$-periodic functions. Also, it is proved in \cite{OO} that
\be\label{O12}
|\Psi_n(x)| \le C(\kappa,\de)2^{n/2}(2^n|\sin \pi x|+1)^{-\kappa}
\ee
with $\kappa$ determined by the smoothness of $\hat\ff(\la)$. In particular, we can always make $\kappa >1$ assuming that $\hat\ff(\la)$ is smooth enough. 
It is more convenient for us to consider $2\pi$-periodic functions. We define $\V:=\{v_k\}_{k=0}^\infty$ with 
$v_k(x):=T_k(x/(2\pi))$ for $x\in [0,2\pi)$. 

In the multivariate case of $\bx=(x_1,\dots,x_d)$ we define the system $\V^d$
as the tensor product of the univariate systems $\V$. Namely, $\V^d :=\{v_\bk(\bx)\}_{\bk\in\Z_+^d}$, where
$$
v_\bk(\bx):= \prod_{i=1}^d v_{k_i}(x_i), \quad \bk=(k_1,\dots,k_d).
$$
Denote
 $$
 \rho^+(\bs):=\{ \bk =(k_1,\dots,k_d)\in \rho(\bs) : k_i\ge 0, i=1,2,\dots\}.
 $$
 Property (\ref{O12}) implies the following simple lemma.
 \begin{Lemma}\label{LO1} We have
 $$
 \|\sum_{\bk\in\rho^+(\bs)} a_\bk v_\bk\|_\infty \le C(d,\kappa,\de)2^{\|\bs\|_1/2} \max_\bk|a_\bk|.
 $$
 \end{Lemma}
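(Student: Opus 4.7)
\textbf{Proof plan for Lemma \ref{LO1}.} The plan is to discard the oscillation and reduce to a purely pointwise absolute-value bound, then exploit the tensor-product structure of $v_\bk$ to factor the estimate into $d$ independent univariate sums, each of which will be controlled by the localization bound \eqref{O12}.

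First I would observe that, for any $\bx \in \T^d$,
\[
\left|\sum_{\bk\in\rho^+(\bs)} a_\bk v_\bk(\bx)\right| \;\le\; \max_{\bk}|a_\bk| \cdot \sum_{\bk\in\rho^+(\bs)} |v_\bk(\bx)|,
\]
and since $\rho^+(\bs)$ is a product set and $v_\bk(\bx) = \prod_{i=1}^d v_{k_i}(x_i)$, the right-hand sum factors as $\prod_{i=1}^d S_i(x_i)$, where
\[
S_i(y) \;:=\; \sum_{k_i : [2^{s_i-1}]\le k_i < 2^{s_i}} |v_{k_i}(y)|.
\]
Thus the claim reduces to the univariate statement $S_i(y) \le C(\kappa,\de)\,2^{s_i/2}$ uniformly in $y$.

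For the univariate bound I would use the definition \eqref{O4}: when $s_i \ge 1$ and $k_i = 2^{s_i-1}+j'$ with $0 \le j' < 2^{s_i-1}$, one has $v_{k_i}(y) = \Psi_{s_i-1,j'}(y/(2\pi))$, which by \eqref{O12} satisfies
\[
|\Psi_{s_i-1,j'}(z)| \;\le\; C(\kappa,\de)\,2^{(s_i-1)/2}\bigl(2^{s_i-1}|\sin\pi(z - (j'+\tfrac12)2^{-(s_i-1)})| + 1\bigr)^{-\kappa}.
\]
The centers $(j'+\tfrac12)2^{-(s_i-1)}$, $0\le j' < 2^{s_i-1}$, are equally spaced modulo $1$, so summing in $j'$ yields a sum of the form $\sum_{j'}(1+2^{s_i-1}\,\mathrm{dist}(z, c_{j'}))^{-\kappa}$, which is dominated by a convergent series $\sum_{\ell \ge 0} (1+\ell)^{-\kappa}$ (up to a constant) precisely because $\kappa > 1$ is guaranteed by choosing $\hat\ff$ sufficiently smooth. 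Hence $S_i(y) \le C(\kappa,\de)\,2^{s_i/2}$; the degenerate case $s_i = 0$ contributes only $v_0 = T_0 = 1$ and is trivial.

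Multiplying the $d$ univariate bounds gives $\sum_{\bk\in\rho^+(\bs)}|v_\bk(\bx)| \le C(d,\kappa,\de)\,2^{\|\bs\|_1/2}$, which combined with the first display completes the proof. The only substantive point is the convergence of the spatial series, which hinges on $\kappa > 1$; this is not really an obstacle since $\kappa$ can be made arbitrarily large by choosing $\hat\ff$ smoother, as noted right after \eqref{O12}.
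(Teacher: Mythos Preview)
Your argument is correct and is exactly the approach the paper intends: the paper does not spell out a proof at all but merely records that the lemma follows from the decay bound \eqref{O12}, and your reduction via the tensor-product factorization together with the $\kappa>1$ summability is the standard way to unpack that implication. There is nothing to compare---you have simply supplied the details the paper omits.
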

 
 We use the notation
 $$
 f_\bk:= \<f,v_\bk\> = (2\pi)^{-d} \int_{\T^d} f(\bx)v_\bk(\bx)d\bx.
 $$
 Denote
 $$
 Q_n^+ := \{ \bk =(k_1,\dots,k_d)\in Q_n : k_i\ge 0, i=1,2,\dots\},\qquad \theta_n:=\{\bs: \|\bs\|_1=n\},
 $$
 $$
 \V(Q_n) := \{f: f=\sum_{\bk\in Q_n^+} c_\bk v_\bk\},\quad \V(Q_n)_A := \{f\in\V(Q_n): \sum_{\bk\in Q_n^+} |f_\bk| \le 1\}. 
 $$
The following theorem was proved in \cite{VT156}.

\begin{Theorem}\label{T6.1} Let $d=2$. For any $f\in \V(Q_n)$ we have
 $$
 \sum_{\bk\in Q_n^+} |f_\bk|  \le C|Q_n|^{1/2}\|f\|_1,
 $$
 where the constant $C$ may depend on the choice of $\hat\ff$.
 \end{Theorem}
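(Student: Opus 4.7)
\medskip\noindent\textbf{Proof plan.} My plan is to pass through per-block $L_1$--$L_\infty$ duality (using Lemma \ref{LO1}) to reduce the wavelet-coefficient $\ell_1$ norm to a weighted sum of $L_1$ norms of block projections, and then to bound the latter by a square-function estimate tailored to the bivariate hyperbolic cross geometry.

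First, for each $\bs$ with $\|\bs\|_1\le n$, let $f_\bs:=\sum_{\bk\in\rho^+(\bs)} f_\bk v_\bk$ denote the wavelet projection of $f$ onto the block $\bs$. Applying Lemma \ref{LO1} with $a_\bk=\overline{\sign(f_\bk)}$ together with $L_1$--$L_\infty$ duality yields the per-block estimate
$$
\sum_{\bk\in\rho^+(\bs)}|f_\bk|=\left\langle f_\bs,\sum_{\bk\in\rho^+(\bs)}\overline{\sign(f_\bk)}\,v_\bk\right\rangle\le C\cdot 2^{\|\bs\|_1/2}\|f_\bs\|_1.
$$
Summing over $\bs$ and grouping by the level $s=\|\bs\|_1$ (which for $d=2$ contains exactly $s+1$ blocks), pointwise Cauchy--Schwarz and Fubini give
$$
\sum_{\bk\in Q_n^+}|f_\bk|\le C\sum_{s=0}^n 2^{s/2}(s+1)^{1/2}\left\|\Bigl(\sum_{\|\bs\|_1=s}|f_\bs|^2\Bigr)^{1/2}\right\|_1.
$$

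The remaining step is a Littlewood--Paley type bound $\|(\sum_{\|\bs\|_1=s}|f_\bs|^2)^{1/2}\|_1\le C\|f\|_1$, uniform in $s$. For sufficiently smooth $\hat\ff$ one expects this from the Calder\'on--Zygmund regularity of the block projection kernels; alternatively, it should follow from the bivariate Riesz product construction, built as a nonnegative test function $\prod_\bs(1+\varepsilon_\bs g_\bs)$ from $L_\infty$-normalized block functions, whose pairing with $f$ majorizes the square function. Granting this estimate, a geometric summation closes the argument:
$$
\sum_{\bk\in Q_n^+}|f_\bk|\le C\|f\|_1\sum_{s=0}^n(s+1)^{1/2}2^{s/2}\le C'\|f\|_1\cdot n^{1/2}2^{n/2}\asymp C'|Q_n|^{1/2}\|f\|_1.
$$

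The main obstacle is the $L_1$-endpoint square-function inequality. For $1<p<\infty$ it is standard Calder\'on--Zygmund/wavelet theory, but at $p=1$ the usual wavelet bases are not unconditional, so one must exploit the specific hyperbolic cross structure of the blocks and the tensor product form of $v_\bk$. This is where the bivariate Riesz product technique of \cite{VT156} enters, and it is also why the statement is restricted to $d=2$: no analog of the bivariate Riesz product is available when $d\ge 3$, so a weaker substitute (as in Theorem \ref{T6.3} below) is used there in place of Theorem \ref{T6.1}.
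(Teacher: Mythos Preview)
The paper does not actually prove Theorem~\ref{T6.1}; it only quotes it from \cite{VT156} and then proves the weaker Theorem~\ref{T6.2}. So there is no in-paper proof to compare against. That said, your outline deserves comment on its own merits.

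Your first two reductions are sound: the per-block duality with Lemma~\ref{LO1} giving $\sum_{\bk\in\rho^+(\bs)}|f_\bk|\le C\,2^{\|\bs\|_1/2}\|f_\bs\|_1$, and the pointwise Cauchy--Schwarz across the $s+1$ blocks at level $s$, are both correct and lead exactly to the displayed bound involving the level-$s$ square function. The final geometric summation is also correct and does yield $n^{1/2}2^{n/2}\asymp|Q_n|^{1/2}$.

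The genuine gap is precisely where you place it: the uniform $L_1$ square-function bound
\[
\Bigl\|\Bigl(\sum_{\|\bs\|_1=s}|f_\bs|^2\Bigr)^{1/2}\Bigr\|_1\le C\|f\|_1.
\]
Your first proposed justification (Calder\'on--Zygmund regularity of the block kernels) is not expected to work: CZ theory at the endpoint $p=1$ delivers only weak-type bounds, and the square function of smooth Littlewood--Paley projections is in general controlled by $\|f\|_{H^1}$, not $\|f\|_{L_1}$. So ``sufficiently smooth $\hat\ff$'' alone will not close this step. Your second suggestion, the bivariate Riesz product, is indeed the mechanism \cite{VT156} uses and is exactly why the theorem is restricted to $d=2$; but you have only named it, not carried it out, and the Riesz product argument there is used to build an $L_\infty$-bounded test function directly rather than to certify an $L_1$ square-function inequality as an intermediate step. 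In short, the proposal correctly isolates the hard core of the theorem and correctly attributes it to the Riesz product technique, but what remains unproved is essentially the entire content of the result; everything you have written out in full is the ``soft'' part, comparable in strength to the proof of Theorem~\ref{T6.2}.
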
 
 
 We prove here an analog of Theorem \ref{T6.1} which covers all $d$ but gives a weaker inequality than Theorem \ref{T6.1} for $d=2$. 
 
 \begin{Theorem}\label{T6.2}  For any $f\in \V(Q_n)$ we have
 $$
 \sum_{\bk\in Q_n^+} |f_\bk|  \le C(d)n^{(d-1)/2}|Q_n|^{1/2}\|f\|_1,
 $$
 where the constant $C(d)$ may depend on $d$ and the choice of $\hat\ff$.
 \end{Theorem}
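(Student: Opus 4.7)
\textbf{Proof proposal for Theorem \ref{T6.2}.} The plan is to dualize against the sign pattern of the coefficients. For each $\bk \in Q_n^+$ pick a unimodular scalar $\varepsilon_\bk$ with $\varepsilon_\bk f_\bk = |f_\bk|$, and set
$$
g := \sum_{\bk\in Q_n^+}\varepsilon_\bk v_\bk.
$$
Because $\V^d$ is orthonormal and $f_\bk = \<f,v_\bk\>$, one reads off
$$
\sum_{\bk\in Q_n^+}|f_\bk| \;=\; \sum_{\bk\in Q_n^+}\varepsilon_\bk \<f,v_\bk\> \;=\; \<f,g\> \;\le\; \|f\|_1\|g\|_\infty.
$$
Hence the entire theorem reduces to a \emph{uniform} bound $\|g\|_\infty \le C(d)\,n^{(d-1)/2}|Q_n|^{1/2}$, valid for every choice of unimodular signs.

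To control $\|g\|_\infty$, I would split $g$ along the dyadic blocks. Write $g = \sum_{\bs:\|\bs\|_1 \le n} g_\bs$ with $g_\bs := \sum_{\bk \in \rho^+(\bs)}\varepsilon_\bk v_\bk$. Lemma \ref{LO1} immediately gives
$$
\|g_\bs\|_\infty \;\le\; C(d,\kappa,\de)\, 2^{\|\bs\|_1/2}\max_{\bk\in\rho^+(\bs)}|\varepsilon_\bk| \;=\; C(d)\, 2^{\|\bs\|_1/2},
$$
since $|\varepsilon_\bk|=1$. Applying the triangle inequality and the fact that $\#\{\bs\in\Z_+^d:\|\bs\|_1 = k\}\asymp k^{d-1}$ yields
$$
\|g\|_\infty \;\le\; \sum_{\bs:\|\bs\|_1\le n}\|g_\bs\|_\infty \;\le\; C(d)\sum_{k=0}^{n}2^{k/2}k^{d-1} \;\le\; C(d)\, 2^{n/2}n^{d-1},
$$
the last step using the geometric domination by the $k=n$ term.

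Finally, since $|Q_n|\asymp 2^n n^{d-1}$, the quantity $2^{n/2}n^{d-1}$ equals $n^{(d-1)/2}\cdot 2^{n/2}n^{(d-1)/2} \asymp n^{(d-1)/2}|Q_n|^{1/2}$, and combining with the display $\sum|f_\bk|\le\|f\|_1\|g\|_\infty$ finishes the proof. The argument is genuinely soft: duality plus the localization bound of Lemma \ref{LO1} plus triangle inequality over blocks. The only nontrivial point is recognizing that the triangle inequality across $\|\bs\|_1 \le n$, which is usually considered very wasteful, is in fact sharp enough to reach the target bound; the $n^{(d-1)/2}$ overhead compared to Theorem \ref{T6.1} is precisely the cost of not having a Riesz-product type cancellation available in dimension $d\ge 3$.
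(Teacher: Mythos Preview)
Your proof is correct and follows essentially the same route as the paper: define the dual function $g=\sum_{\bk}\varepsilon_\bk v_\bk$ (the paper writes it blockwise as $\sum_\bs t_\bs$ with $t_\bs=\sum_{\bk\in\rho^+(\bs)}(\sign f_\bk)v_\bk$), bound each block in $L_\infty$ via Lemma~\ref{LO1}, and sum over $\|\bs\|_1\le n$ by the triangle inequality to reach $2^{n/2}n^{d-1}\asymp n^{(d-1)/2}|Q_n|^{1/2}$. The only cosmetic difference is that the paper applies the duality $\<f,t_\bs\>\le\|f\|_1\|t_\bs\|_\infty$ block by block rather than to the full sum $g$, which is of course equivalent.
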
 
 \begin{proof} Denote
 $$
 t_\bs := \sum_{\bk\in\rho^+(\bs)}(\sign f_\bk) v_\bk.
 $$
By Lemma \ref{LO1}
$$
\|t_\bs\|_\infty \ll 2^{\|\bs\|_1/2}.
$$
Then we get
$$
\sum_{\|\bs\|_1\le n} \sum_{\bk\in\rho^+(\bs)} |f_\bk|  = \<f,\sum_{\|\bs\|_1\le n} t_\bs\>
$$
$$
\le \sum_{\|\bs\|_1\le n} \|f\|_1 \|t_\bs\|_\infty \ll \sum_{\|\bs\|_1\le n} \|f\|_1 2^{\|\bs\|_1/2} \ll n^{d-1}2^n \|f\|_1.
$$
\end{proof}

Further, we use the technique developed in \cite{VT156}, which is based on the following two steps strategy. At the first step we obtain bounds of the best $m$-term approximations with respect to a dictionary. At the second step we use general inequalities relating the entropy numbers to the best $m$-term approximations.  
Let $\D=\{g_j\}_{j=1}^N$ be a system of elements of cardinality $|\D|=N$ in a Banach space $X$. Consider best $m$-term approximations of $f$ with respect to $\D$
$$
\sigma_m(f,\D)_X:= \inf_{\{c_j\};\Lambda:|\Lambda|=m}\|f-\sum_{j\in \Lambda}c_jg_j\|.
$$
For a function class $F$ set
$$
\sigma_m(F,\D)_X:=\sup_{f\in F}\sigma_m(f,\D)_X.
$$

We now need the following lemma from \cite{VT156}.

\begin{Lemma}\label{L6.2} Let $2\le p<\infty$. Let $\V_n^1 := \{v_\bk:\bk \in Q_n\}$. Then
\be\label{4.9v}
\sigma_m(\V(Q_n)_A,\V_n^1)_p \ll |Q_n|^{1/2-1/p}m^{1/p-1}.
\ee
\end{Lemma}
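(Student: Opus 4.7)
The plan is to use the greedy (thresholding) algorithm. Given $f=\sum_{\bk\in Q_n^+}f_\bk v_\bk\in\V(Q_n)_A$, sort $\{|f_\bk|\}$ in decreasing order, writing $|a_1^*|\ge|a_2^*|\ge\cdots$, let $\bk(k)$ be the corresponding index, and set $f_m:=\sum_{k\le m}a_k^*v_{\bk(k)}$. The constraint $\sum_\bk|f_\bk|\le 1$ combined with sorting gives $|a_k^*|\le 1/k$.

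First, by orthonormality of $\V$,
\[
\|f-f_m\|_2^2=\sum_{k>m}(a_k^*)^2\le|a_{m+1}^*|\sum_{k>m}|a_k^*|\le\frac{1}{m+1},
\]
which establishes the lemma for $p=2$. For $p>2$ I would pass to $L_p$ through a Nikolskii-type embedding on $\V(Q_n)$. By (\ref{O4'}) each $v_\bk$ with $\bk\in Q_n^+$ lies in $\Tr(cQ_n)$ for some absolute constant $c=c(\de)$, so $\V(Q_n)\subset\Tr(cQ_n)$, and interpolating Theorem \ref{T3.1} with $q=2$ between $L_2$ and $L_\infty$ yields the Nikolskii-type inequality
\[
\|g\|_p\ll|Q_n|^{1/2-1/p}\|g\|_2,\qquad g\in\V(Q_n).
\]
Combined with the $L_2$ estimate this already gives $\sigma_m\ll|Q_n|^{1/2-1/p}m^{-1/2}$, matching the target only at $p=2$.

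The hard part for $p>2$ is sharpening $m^{-1/2}$ to $m^{1/p-1}$. My plan is to decompose the tail into dyadic coefficient-rank blocks $G_j:=\{k:2^jm<k\le 2^{j+1}m\}$, write $f-f_m=\sum_{j\ge 0}h_j$ with $h_j:=\sum_{k\in G_j}a_k^*v_{\bk(k)}$, and $L_p$-interpolate each piece. The orthonormal $L_2$-norm is $\|h_j\|_2\le 2^{-j/2}m^{-1/2}$. For the $L_\infty$-norm I would apply Lemma \ref{LO1} scale by scale: for each $\bs$ with $\|\bs\|_1\le n$, since the greedy truncation caps every coefficient in $G_j$ at $1/(2^jm)$,
\[
\Big\|\sum_{\bk\in\rho^+(\bs)\cap\{\bk(k):\,k\in G_j\}}a_k^*\,v_\bk\Big\|_\infty\ll \frac{2^{\|\bs\|_1/2}}{2^jm}.
\]
Summing over $\bs$ and using that only the scales actually populated by coefficients in $G_j$ contribute should yield $\|h_j\|_\infty\ll|Q_n|^{1/2}/(2^jm)$. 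Interpolating $\|h_j\|_p\le\|h_j\|_2^{2/p}\|h_j\|_\infty^{1-2/p}$ and summing the resulting geometric series $\sum_j 2^{-j(1-1/p)}\ll 1$ then produces the factor $m^{1/p-1}$ together with $|Q_n|^{1/2-1/p}$.

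The main obstacle is arranging the scale-by-scale $L_\infty$ bound so that summation over $\bs$ produces exactly $|Q_n|^{1/2}$ rather than a strictly larger quantity: the crude estimate $\sum_{\|\bs\|_1\le n}2^{\|\bs\|_1/2}\ll 2^{n/2}n^{d-1}$ costs an extra factor $n^{(d-1)/2}$ at the interpolation step. Overcoming this requires either a refined counting of which scales $\bs$ can actually be hit by indices from $G_j$ (since $|G_j|=2^jm$ and $|Q_n|\asymp 2^n n^{d-1}$, only a controlled number of scales are "active"), or replacing the triangle-inequality pass through scales by a direct $L_2\to L_\infty$ Nikolskii estimate on $\V(Q_n)$ applied after isolating the scales contributing to $h_j$. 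This is the step where the detailed wavelet structure and the Nikolskii inequality must be carefully balanced to produce the claimed sharp exponents.
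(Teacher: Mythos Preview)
The paper does not prove this lemma; it quotes it from \cite{VT156}. So the comparison is between your attempt and the argument needed to obtain the stated bound.

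Your treatment of $p=2$ is correct. For $p>2$ there is a genuine gap, and neither of your proposed fixes closes it. Your scheme hinges on the estimate $\|h_j\|_\infty\ll|Q_n|^{1/2}/(2^jm)$, but this is not available from the tools you invoke. The ``refined counting'' idea fails in the worst case: nothing prevents the $2^jm$ indices in $G_j$ from being distributed over all $\asymp n^{d-1}$ top-level scales, so summing Lemma~\ref{LO1} scale by scale still produces the factor $n^{(d-1)/2}$. The alternative you mention, applying the $L_2\to L_\infty$ Nikol'skii inequality directly to $h_j$, gives only $\|h_j\|_\infty\ll|Q_n|^{1/2}(2^jm)^{-1/2}$; plugging this into $\|h_j\|_p\le\|h_j\|_2^{2/p}\|h_j\|_\infty^{1-2/p}$ yields $\|h_j\|_p\ll|Q_n|^{1/2-1/p}(2^jm)^{-1/2}$, and after summing in $j$ you are back to $m^{-1/2}$, not $m^{1/p-1}$.

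The missing idea is to decompose the tail by \emph{scale} $\bs$ rather than by coefficient rank, and to combine scales in $\ell_2$ rather than $\ell_1$. Write $g:=f-f_m=\sum_{\bs}g_\bs$ with $g_\bs=\sum_{\bk\in\rho^+(\bs)}b_\bk v_\bk$, where $|b_\bk|\le 1/m$ and $S_\bs:=\sum_{\bk\in\rho^+(\bs)}|b_\bk|$ satisfies $\sum_\bs S_\bs\le 1$. Interpolating $\|g_\bs\|_2=\|b^{(\bs)}\|_{\ell_2}\le S_\bs^{1/2}m^{-1/2}$ with $\|g_\bs\|_\infty\ll 2^{\|\bs\|_1/2}m^{-1}$ (Lemma~\ref{LO1}) gives
\[
\|g_\bs\|_p\ll 2^{\|\bs\|_1(1/2-1/p)}\,m^{1/p-1}\,S_\bs^{1/p}.
\]
For $2\le p<\infty$ the hyperbolic Littlewood--Paley inequality (the blocks $g_\bs$ have boundedly overlapping spectra by~(\ref{O4'})) yields $\|g\|_p\ll(\sum_\bs\|g_\bs\|_p^2)^{1/2}$, and then H\"older with exponents $p/2$ and $p/(p-2)$ gives
\[
\sum_\bs 2^{\|\bs\|_1(1-2/p)}S_\bs^{2/p}\le\Bigl(\sum_\bs 2^{\|\bs\|_1}\Bigr)^{1-2/p}\Bigl(\sum_\bs S_\bs\Bigr)^{2/p}\ll|Q_n|^{1-2/p},
\]
which is exactly (\ref{4.9v}). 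The $\ell_2$-combination over scales is what eliminates the logarithmic loss; your $\ell_1$-combination (triangle inequality across scales inside the $L_\infty$ estimate) is the source of the extra $n^{(d-1)/2}$.
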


Lemma \ref{L6.2} and Theorem \ref{T6.1} imply.
\begin{Lemma}\label{L6.3} Let $2\le p<\infty$. Let $\V_n^1 := \{v_\bk:\bk \in Q_n\}$. Then
$$
\sigma_m(\V(Q_n)_1,\V_n^1)_p \ll n^{(d-1)/2}(|Q_n|/m)^{1-1/p}.
$$
\end{Lemma}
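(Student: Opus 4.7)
The plan is to combine Theorem \ref{T6.2} with Lemma \ref{L6.2} by a simple homogeneity/rescaling argument; no new ideas beyond these two ingredients are needed.

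First, I would note that $\V(Q_n)_1$ and $\V(Q_n)_A$ differ only in how we measure the size of $f$ --- the former uses the $L_1$ norm, the latter the $\ell_1$ norm of the $v_\bk$-coefficients. Theorem \ref{T6.2} is precisely a comparison between these two norms on $\V(Q_n)$: for any $f\in \V(Q_n)_1$, one has $\sum_{\bk\in Q_n^+}|f_\bk| \le C(d) n^{(d-1)/2}|Q_n|^{1/2}$. Consequently the rescaled function $g := f/\lambda$ with $\lambda := C(d) n^{(d-1)/2}|Q_n|^{1/2}$ lies in $\V(Q_n)_A$.

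Next, I would apply Lemma \ref{L6.2} to $g$. Since the best $m$-term approximation functional $\sigma_m(\cdot,\V_n^1)_p$ is positively homogeneous of degree $1$,
$$
\sigma_m(f,\V_n^1)_p = \lambda\,\sigma_m(g,\V_n^1)_p \ll \lambda\, |Q_n|^{1/2-1/p} m^{1/p-1}.
$$
Substituting the value of $\lambda$ and simplifying the exponents gives
$$
\sigma_m(f,\V_n^1)_p \ll n^{(d-1)/2}|Q_n|^{1/2}\cdot |Q_n|^{1/2-1/p} m^{1/p-1} = n^{(d-1)/2}(|Q_n|/m)^{1-1/p},
$$
and taking the supremum over $f \in \V(Q_n)_1$ yields the claimed bound.

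There is no real obstacle here --- the whole content of the lemma has been packaged into Theorem \ref{T6.2} (the $L_1 \to \ell_1$ comparison) and Lemma \ref{L6.2} (the $m$-term rate for the $\ell_1$-ball). The only thing to double-check is that the definitions match: Lemma \ref{L6.2} uses the $\ell_1$-ball $\V(Q_n)_A$ in its hypothesis, while Lemma \ref{L6.3} uses the $L_1$-ball $\V(Q_n)_1$, and Theorem \ref{T6.2} is exactly the bridge. One could alternatively state the result by substituting the improved Theorem \ref{T6.1} when $d=2$, which would save the factor $n^{(d-1)/2}=n^{1/2}$ --- consistent with the stronger bound recalled in Theorem \ref{T2.2}.
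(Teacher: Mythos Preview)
Your proposal is correct and matches the paper's approach exactly: the paper's entire proof is the single line ``Lemma \ref{L6.2} and Theorem \ref{T6.1} imply,'' and you have supplied the homogeneity/rescaling details behind that implication. (Note that the paper's reference to Theorem \ref{T6.1} is evidently a typo for Theorem \ref{T6.2}, since the bound in Lemma \ref{L6.3} carries the factor $n^{(d-1)/2}$ from Theorem \ref{T6.2}; your use of Theorem \ref{T6.2} is the right one.)
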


Proceeding as in \cite{VT156} we obtain from here the following weaker analog
of Theorem \ref{T2.2} (see Theorem 7.4 in \cite{VT156}).

\begin{Theorem}\label{T6.3} We have 
$$
\e_k(\Tr( Q_n)_1,L_\infty) \ll  \left\{\begin{array}{ll} n^{d/2}(| Q_n|/k) \log (4| Q_n|/k), &\quad k\le 2| Q_n|,\\
n^{d/2}2^{-k/(2| Q_n|)},&\quad k\ge 2| Q_n|.\end{array} \right.
$$
\end{Theorem}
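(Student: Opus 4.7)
The proof will follow the two-step strategy of \cite{VT156} used there to derive Theorem \ref{T2.2}, with the bivariate-specific input (the $d=2$ analog of Theorem \ref{T6.1}) replaced by the all-dimensional Lemma \ref{L6.3}. Since Lemma \ref{L6.3} is weaker than its $d=2$ counterpart by exactly the factor $n^{(d-1)/2}$, running it through the same machinery produces the same entropy bound as in Theorem \ref{T2.2} with the prefactor $n^{1/2}$ replaced by $n^{d/2}$, which is precisely the statement of Theorem \ref{T6.3}.

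\textbf{Step A: reduction to the wavelet model.} The frequency-localization property \eqref{O4'} of the $v_\bk$ implies that any $t\in\Tr(Q_n)$ has wavelet expansion $t=\sum_\bk\<t,v_\bk\>v_\bk$ supported on the set of $\bk$ for which $\hat v_\bk$ meets $Q_n$; this index set lies in a bounded enlargement of $Q_n^+$ and has cardinality $N\asymp|Q_n|$. Up to absolute constants this reduces the problem to bounding $\e_k(\V(Q_n)_1,L_\infty)$ with respect to the dictionary $\V_n^1$ of size $N$.

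\textbf{Step B: from $m$-term approximation to entropy.} Apply Lemma \ref{L6.3} (say with $p=2$) to obtain an $m$-term approximant $g_m\in\sp\V_n^1$ of $f\in\V(Q_n)_1$ with $\|f-g_m\|_2\ll n^{(d-1)/2}(N/m)^{1/2}$. Upgrade this to an $L_\infty$ bound by splitting the residual along the blocks $\rho^+(\bs)$ with $\|\bs\|_1\le n$, controlling each block by Lemma \ref{LO1}, and summing via Cauchy--Schwarz over the $\ll n^{d-1}$ blocks against the $\ell_2$ tail; this yields
$$
\sigma_m(\V(Q_n)_1,\V_n^1)_\infty\ll n^{d/2}(N/m),\qquad m\le 2N,
$$
with the exponential-decay analog $\ll n^{d/2}2^{-m/(2N)}$ in the regime $m>2N$ (where one further quantizes coefficients within the full dictionary). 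Covering the set of $m$-term approximants by quantizing coefficients to matching accuracy yields an $\e$-net of log-cardinality $\asymp m\log(4N/m)$; balancing this with $k$ selects $m\asymp k/\log(4N/k)$ when $k\le 2N$, at which the net attains accuracy $\ll n^{d/2}(N/k)\log(4N/k)$ in $L_\infty$, while for $k>2N$ the exponential-decay branch gives $n^{d/2}2^{-k/(2N)}$.

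\textbf{Main obstacle.} The delicate point is the $L_p\to L_\infty$ upgrade inside Step B: the loss must be only an $n^{1/2}$ factor, not the $n^{d-1}$ that a raw application of the Nikol'skii inequality of Theorem \ref{T3.1} would cost. The dyadic-block decomposition via Lemma \ref{LO1}, combined with Cauchy--Schwarz over the $\ll n^{d-1}$ scales, is what buys this back. Once this is in place the rest is bookkeeping that mirrors the derivation of Theorem 7.4 in \cite{VT156}.
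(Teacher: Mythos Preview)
Your high-level plan is exactly what the paper does: it simply says ``Proceeding as in \cite{VT156}'' with Lemma~\ref{L6.3} (valid for all $d$) in place of the $d=2$ input, and defers every detail to Theorem~7.4 of \cite{VT156}. So at the level of strategy your proposal and the paper coincide.

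The problem is in Step~B, where you try to spell out the mechanism and claim an intermediate bound
\[
\sigma_m(\V(Q_n)_1,\V_n^1)_\infty\ll n^{d/2}(N/m)
\]
via an $L_2\to L_\infty$ upgrade that ``loses only $n^{1/2}$'' through a blockwise Cauchy--Schwarz argument. That step does not work as you describe. Writing the residual $r=\sum_\bs r_\bs$ and using Lemma~\ref{LO1} gives $\|r_\bs\|_\infty\lesssim 2^{\|\bs\|_1/2}\|r_\bs\|_2$, and then
\[
\|r\|_\infty\le\sum_{\|\bs\|_1\le n}\|r_\bs\|_\infty
\le\Bigl(\sum_{\|\bs\|_1\le n}2^{\|\bs\|_1}\Bigr)^{1/2}\|r\|_2
\asymp |Q_n|^{1/2}\|r\|_2,
\]
which is the \emph{same} loss as the Nikol'skii inequality of Theorem~\ref{T3.1} for $q=2$, not an $n^{1/2}$ loss. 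Plugging in $\|r\|_2\ll n^{(d-1)/2}(N/m)^{1/2}$ from Lemma~\ref{L6.3} with $p=2$ yields only $\sigma_m(\cdot)_\infty\ll n^{(d-1)/2}N m^{-1/2}$, which is too weak by a factor $(N/m)^{1/2}$ and would give the wrong power of $k$ in the entropy bound.

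The actual argument in \cite{VT156} does not pass through a single $\sigma_m$-in-$L_\infty$ estimate obtained from $p=2$. It uses Lemma~\ref{L6.3} for the full range $2\le p<\infty$ (the exponent $1-1/p$ is the point), optimizes over $p$ of order $\log(N/m)$, and combines this with the general entropy-from-sparse-approximation machinery and the bound $\|v_\bk\|_\infty\lesssim 2^{n/2}$ to produce the stated result; the extra $n^{1/2}$ (turning $n^{(d-1)/2}$ into $n^{d/2}$) emerges from that optimization, not from a blockwise $L_2\to L_\infty$ jump on the residual. So your plan is right, but your sketch of Step~B should be replaced by a direct appeal to the corresponding argument in \cite{VT156} rather than the $p=2$ upgrade you wrote.
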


{\bf Acknowledgements.}
The author is grateful to Boris Kashin for very useful comments.

\end{document}